
\documentclass{birkjour}
%
%
%
 \newtheorem{theorem}{Theorem}[section]
 \newtheorem{corollary}[theorem]{Corollary}
 \newtheorem{lemma}[theorem]{Lemma}
 \newtheorem{proposition}[theorem]{Proposition}
 \theoremstyle{definition}
 
 \theoremstyle{remark}
 \newtheorem{remark}[theorem]{Remark}
 \newtheorem{example}{Example}
 \numberwithin{equation}{section}

\begin{document}

%
%
%
%
%
%
%
%
%

\title[New insights on slant submanifolds] {New insights on slant submanifolds}

\author[Adara M. Blaga]{Adara M. Blaga}

\address{%
Department of Mathematics\\
Faculty of Mathematics and Computer Science\\
West University of Timi\c{s}oara\\
V. P\^ arvan 4-6, Timi\c{s}oara, 300223,
Rom\^{a}nia}

\email{adarablaga@yahoo.com}

\subjclass{53C15; 53C05; 53C38}

\keywords{Pointwise slant submanifold; Almost Hermitian manifold}

\date{8 May, 2023}

\begin{abstract}
We provide the necessary and sufficient condition for a pointwise slant submanifold with respect to two anti-commuting almost Hermitian structures to be also pointwise slant with respect to a family of almost Hermitian structures generated by them. On the other hand, we show that the property of being pointwise slant is transitive on a class of proper pointwise slant immersed submanifolds of almost Hermitian manifolds. We illustrate the results with suitable examples.
\end{abstract}

\maketitle

\section{Introduction}

The notion of \textit{slant submanifold} was initially defined in 1990 by Chen \cite{chen} as an isometric immersed submanifold of an almost Hermitian manifold with constant Wirtinger angle. In 1998, Etayo \cite{etayo} extended this notion to that of \textit{pointwise slant submanifold} (named, at that time, quasi-slant submanifold) by letting the Wirtinger angle to depend on the points of the submanifold (see also \cite{chen5}). Since then, the theory of slant and pointwise slant submanifolds has been developed in different geometries.

Apart from the ideas that have already appeared in this context, in the present paper, we put into light new aspects of pointwise slant submanifolds in the almost Hermitian setting.
On a Riemannian manifold $(\bar M,\bar g)$, we consider a family $\{\bar J_{a,b}\}_{a,b}$ of almost Hermitian structures defined by a pair of anti-commuting $\bar g$-skew-sym\-met\-ric almost complex structures $(\bar J_1,\bar J_2)$, we characterize their integrability in terms of Fr\"olicher--Nijenhuis bracket, and we provide the necessary and sufficient condition for a pointwise slant submanifold with respect to $(\bar g,\bar J_1)$ and $(\bar g,\bar J_2)$ to be also pointwise slant with respect to $(\bar g,\bar J_{a,b})$. In particular, we show that if a submanifold is anti-invariant in $(\bar M,\bar g,\bar J_1)$ and $(\bar M,\bar g,\bar J_2)$, then it is anti-invariant in $(\bar M,\bar g,\bar J_{a,b})$, too. On the other hand, we prove that a pointwise slant
immersed submanifold $M_3$ (with a slant function $\theta_3$) of a proper pointwise slant
immersed submanifold $M_2$ (with a slant function $\theta_2$) of an almost Hermitian manifold $M_1$ is also pointwise slant in $M_1$, with a slant function $\geq \max\{\theta_2,\theta_3\}$. We also generalize some of the results and construct examples.

\section{A family of almost complex structures}

Let $(\bar M,\bar g)$ be a Riemannian manifold. We shall further denote by $\bar \nabla$ the Levi-Civita connection of $\bar g$, and by $\Gamma(T\bar M)$ the set of smooth sections of $\bar M$.

We recall that an endomorphism $\bar J$ of the tangent bundle $T\bar M$ is an \textit{almost complex structure} on $\bar M$ if $\bar J^2=-I$,
where $I$ stands for the identity endomorphism. Moreover, if $\bar J$ is skew-symmetric with respect to $\bar g$, then $(\bar g,\bar J)$ is called an \textit{almost Hermitian structure} on $\bar M$. Furthermore, an almost Hermitian structure $(\bar g,\bar J)$ is said to be a \textit{K\"{a}hler structure} if $\bar J$ is parallel with respect to the Levi-Civita connection of $\bar g$, i.e., if $\bar \nabla \bar J=0$.

\bigskip

By means of two almost complex structures $\bar J_1$ and $\bar J_2$ which are anti-commuting (i.e., $\bar J_1\bar J_2=-\bar J_2\bar J_1$), we shall define a family of almost complex structures and characterize their integrability.

\begin{proposition}\label{p4}
Let $(\bar g,\bar J_i)$, $i={1,2}$, be two anti-commuting almost Her\-mi\-tian structures on $\bar M$, let $a$ and $b$ be two smooth functions on $\bar M$, and let $\bar J_{a,b}:= a\bar J_1+b\bar J_2$. Then:

(i) the endomorphism $\bar J_{a,b}$ is an almost complex structure on $\bar M$ if and only if $a^2+b^2=1$; in this case, $(\bar g,\bar J_{a,b})$ is an almost Hermitian structure on $\bar M$;

(ii) if the manifold $\bar M$ is connected, and if $\bar \nabla \bar J_1=0$ and $\bar \nabla \bar J_2=0$, then $\bar \nabla \bar J_{a,b}=0$ if and only if $a$ and $b$ are constant.
\end{proposition}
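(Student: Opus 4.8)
The plan is to handle both parts by direct computation; the only point that is not completely routine is a pointwise linear-independence fact for $\bar J_1,\bar J_2$ that makes essential use of anti-commutativity.

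For (i), I would evaluate $\bar J_{a,b}^2$ on an arbitrary $X\in\Gamma(T\bar M)$. Since $a,b$ are functions and $\bar J_1,\bar J_2$ are $(1,1)$-tensors, composing involves no derivatives of $a,b$, and one gets $\bar J_{a,b}^2X=a^2\bar J_1^2X+ab(\bar J_1\bar J_2+\bar J_2\bar J_1)X+b^2\bar J_2^2X$. By $\bar J_i^2=-I$ and $\bar J_1\bar J_2=-\bar J_2\bar J_1$ the cross term vanishes, leaving $\bar J_{a,b}^2=-(a^2+b^2)I$; hence $\bar J_{a,b}^2=-I$ exactly when $a^2+b^2=1$. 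In that case $\bar g(\bar J_{a,b}X,Y)=a\,\bar g(\bar J_1X,Y)+b\,\bar g(\bar J_2X,Y)=-a\,\bar g(X,\bar J_1Y)-b\,\bar g(X,\bar J_2Y)=-\bar g(X,\bar J_{a,b}Y)$, so $(\bar g,\bar J_{a,b})$ is almost Hermitian.

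For (ii), the ``if'' part is immediate: when $a,b$ are constant, $\bar\nabla\bar J_{a,b}=a\,\bar\nabla\bar J_1+b\,\bar\nabla\bar J_2=0$. For the ``only if'' part, I would apply the Leibniz rule for $\bar\nabla$ to the tensor $\bar J_{a,b}$: for $X,Y\in\Gamma(T\bar M)$,
$$(\bar\nabla_X\bar J_{a,b})Y=(Xa)\,\bar J_1Y+(Xb)\,\bar J_2Y+a\,(\bar\nabla_X\bar J_1)Y+b\,(\bar\nabla_X\bar J_2)Y,$$
which by $\bar\nabla\bar J_1=\bar\nabla\bar J_2=0$ reduces to $(Xa)\,\bar J_1Y+(Xb)\,\bar J_2Y$. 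Thus $\bar\nabla\bar J_{a,b}=0$ is equivalent to $(Xa)\bar J_1+(Xb)\bar J_2=0$ (as an endomorphism) for every vector field $X$.

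The final step is to deduce $Xa=Xb=0$ from this, and here anti-commutativity enters: $\bar J_1$ and $\bar J_2$ are pointwise linearly independent over $\mathbb R$. Indeed, if $\bar J_1=c\,\bar J_2$ at some point (the case $\bar J_2=c\,\bar J_1$ being symmetric), then $-I=\bar J_1^2=c^2\bar J_2^2=-c^2I$, so $c^2=1$; but then $\bar J_1\bar J_2=c\bar J_2^2=-cI=\bar J_2\bar J_1$, which together with $\bar J_1\bar J_2=-\bar J_2\bar J_1$ forces $c=0$, a contradiction. Hence at each point $(Xa)\bar J_1+(Xb)\bar J_2=0$ yields $Xa=Xb=0$; as $X$ is arbitrary, $\da a=\da b=0$, and since $\bar M$ is connected, $a$ and $b$ are constant. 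The one genuine obstacle is exactly this independence step — without anti-commutativity the statement is false — so the proof must pin down precisely where that hypothesis is used.
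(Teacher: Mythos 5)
Your proof is correct and follows essentially the same route as the paper: the same computation of $\bar J_{a,b}^2$ and of the skew-symmetry for (i), and the same Leibniz-rule reduction of (ii) to $(Xa)\bar J_1+(Xb)\bar J_2=0$. The only cosmetic difference is the last step, where the paper composes that identity with $\bar J_1$ and $\bar J_2$ and uses anti-commutativity to get $(Xa)^2+(Xb)^2=0$ directly, while you isolate the equivalent fact that anti-commuting almost complex structures are pointwise linearly independent; both arguments are valid and use the anti-commutation hypothesis in the same essential way.
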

\begin{proof}
(i) $$\bar J_{a,b}^2=a^2\bar J_1^2+b^2\bar J_2^2+ab(\bar J_1\bar J_2+\bar J_2\bar J_1)=-(a^2+b^2)I,$$
and, for any $X,Y\in\Gamma(T\bar M)$, we have
\begin{align}
\bar g(\bar J_{a,b}X,Y)&=a\bar g(\bar J_1X,Y)+b\bar g(\bar J_2X,Y)\notag\\
&=-a\bar g(X,\bar J_1Y)-b\bar g(X,\bar J_2Y)=-\bar g(X,\bar J_{a,b}Y).\notag
\end{align}

(ii) For any $X,Y\in \Gamma(T\bar M)$, we have
\begin{align*}
(\bar \nabla_X \bar J_{a,b})Y:&=\bar \nabla_X\bar J_{a,b}Y-\bar J_{a,b}(\bar \nabla_XY)\notag\\
&=a(\bar \nabla_X \bar J_1)Y+b(\bar \nabla_X \bar J_2)Y+X(a)\bar J_1Y+X(b)\bar J_2Y\notag\\
&=X(a)\bar J_1Y+X(b)\bar J_2Y;
\end{align*}
therefore, $\bar \nabla\bar J_{a,b}=0$ if and only if
$$X(a)\bar J_1Y+X(b)\bar J_2Y=0$$
for any $X,Y\in \Gamma(T\bar M)$.
Applying $\bar J_1$ and $\bar J_2$, respectively, and taking into account that $\bar J_1\bar J_2=-\bar J_2\bar J_1$, we get
$$\Big(X(a)\Big)^2+\Big(X(b)\Big)^2=0$$
for any $X\in \Gamma(T\bar M)$, hence the conclusion.
The converse implication is trivial.
\end{proof}

More general, we have

\begin{proposition}
Let $(\bar g,\bar J_i)$, $i=\overline{1,k}$, be $k$ pairwise anti-commuting almost Hermitian structures on $\bar M$, let $a_i$, $i=\overline{1,k}$, be $k$ smooth functions on $\bar M$, and let $\bar J_{a_1,\dots,a_k}:=\nolinebreak \sum_{i=1}^ka_i\bar J_i$.
Then, the endomorphism $\bar J_{a_1,\dots,a_k}$ is an almost complex structure on $\bar M$ if and only if $\sum_{i=1}^ka_i^2=1$. In this case, $(\bar g,\bar J_{a_1,\dots,a_k})$ is an almost Hermitian structure on $\bar M$.
\end{proposition}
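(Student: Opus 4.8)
The plan is to mimic the two-part computation used in Proposition~\ref{p4}(i), carrying the anti-commutativity through a sum rather than a single cross term. First I would compute $\bar J_{a_1,\dots,a_k}^2 = \bigl(\sum_{i=1}^k a_i\bar J_i\bigr)\bigl(\sum_{j=1}^k a_j\bar J_j\bigr) = \sum_{i=1}^k a_i^2\bar J_i^2 + \sum_{i<j} a_i a_j(\bar J_i\bar J_j + \bar J_j\bar J_i)$, where I split the double sum into diagonal and off-diagonal parts. Since each $\bar J_i$ is an almost complex structure, $\bar J_i^2=-I$, and since the structures are pairwise anti-commuting, $\bar J_i\bar J_j+\bar J_j\bar J_i=0$ for every $i\neq j$; hence each cross term vanishes and $\bar J_{a_1,\dots,a_k}^2 = -\bigl(\sum_{i=1}^k a_i^2\bigr)I$. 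Therefore $\bar J_{a_1,\dots,a_k}^2=-I$ if and only if $\sum_{i=1}^k a_i^2=1$, which is the stated condition.

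Second, for the almost Hermitian claim, assuming $\sum_i a_i^2=1$, I would verify the $\bar g$-skew-symmetry of $\bar J_{a_1,\dots,a_k}$ by linearity: for any $X,Y\in\Gamma(T\bar M)$,
\[
\bar g\bigl(\bar J_{a_1,\dots,a_k}X,Y\bigr)=\sum_{i=1}^k a_i\,\bar g(\bar J_iX,Y)=-\sum_{i=1}^k a_i\,\bar g(X,\bar J_iY)=-\bar g\bigl(X,\bar J_{a_1,\dots,a_k}Y\bigr),
\]
using that each $(\bar g,\bar J_i)$ is almost Hermitian, i.e. $\bar g(\bar J_iX,Y)=-\bar g(X,\bar J_iY)$. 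Combined with $\bar J_{a_1,\dots,a_k}^2=-I$, this shows $(\bar g,\bar J_{a_1,\dots,a_k})$ is an almost Hermitian structure, completing the proof. The converse direction in the first claim is immediate: if $\bar J_{a_1,\dots,a_k}^2=-I$, then $-\bigl(\sum_i a_i^2\bigr)I=-I$ forces $\sum_i a_i^2=1$ pointwise.

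There is essentially no serious obstacle here; the only point that needs a word of care is the bookkeeping of the double sum — making sure the diagonal terms $i=j$ are separated cleanly from the symmetric off-diagonal pairs, and that "pairwise anti-commuting" is exactly the hypothesis that kills all the latter. Everything else is a direct generalization of the $k=2$ case already established, so I would keep the exposition brief and simply indicate that the computation proceeds as in the proof of Proposition~\ref{p4}(i).
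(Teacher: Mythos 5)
Your proof is correct and follows essentially the same route as the paper: expand the square, separate the diagonal terms $\bar J_i^2=-I$ from the off-diagonal pairs killed by pairwise anti-commutativity, and verify skew-symmetry by linearity. No gaps.
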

\begin{proof}
$$\bar J_{a_1,\dots,a_k}^2=\sum_{i=1}^ka_i^2\bar J_i^2+\sum_{1\leq i<j\leq k}a_ia_j(\bar J_i\bar J_j+\bar J_j\bar J_i)=-\sum_{i=1}^ka_i^2I,$$
and, for any $X,Y\in \Gamma(T\bar M)$, we have
\begin{align*}
\bar g(\bar J_{a_1,\dots,a_k}X,Y)&=\sum_{i=1}^ka_i\bar g(\bar J_iX,Y)\\
&=-\sum_{i=1}^ka_i\bar g(X,\bar J_iY)=-\bar g(X,\bar J_{a_1,\dots,a_k}Y).
\end{align*}
\end{proof}

Let us denote by $N_{\bar J_i}$ the \textit{Nijenhuis tensor field} of $\bar J_i$, $i={1,2}$:
$$N_{\bar J_i}(X,Y):=[\bar J_iX,\bar J_iY]-\bar J_i[\bar J_iX, Y]-\bar J_i[X, \bar J_iY]+\bar J_i^{2}[X,Y],$$
and by $[\bar J_1,\bar J_2]$ the \textit{Fr\"olicher--Nijenhuis bracket} of $\bar J_1$ and $\bar J_2$:
\begin{align}
[\bar J_1,\bar J_2](X,Y):&=[\bar J_1X,\bar J_2Y]+[\bar J_2X,\bar J_1Y]+\bar J_1\bar J_2[X,Y]+\bar J_2\bar J_1[X,Y]\notag\\
&\hspace{8pt}-\bar J_1[\bar J_2X,Y]-\bar J_1[X,\bar J_2Y]-\bar J_2[\bar J_1X,Y]-\bar J_2[X,\bar J_1Y]\notag
\end{align}
for $X,Y\in \Gamma(T\bar M)$.
Since $\bar J_1$ and $\bar J_2$ anti-commute, we have
\begin{align}
[\bar J_1,\bar J_2](X,Y)&=[\bar J_1X,\bar J_2Y]+[\bar J_2X,\bar J_1Y]\notag\\
&\hspace{10pt}-\bar J_1\Big([\bar J_2X,Y]+[X,\bar J_2Y]\Big)-\bar J_2\Big([\bar J_1X,Y]+[X,\bar J_1Y]\Big).\notag
\end{align}

We will further characterize the integrability of $\bar J_{a,b}$ in terms of Fr\"olicher--Nijenhuis bracket of $\bar J_1$ and $\bar J_2$.

We recall that a $(1,1)$-tensor field $\bar J$ on $\bar M$ is said to be \textit{integrable} if its Nijenhuis tensor field, $N_{\bar J}$, vanishes.
In particular, an integrable almost complex structure is called a \textit{complex structure}.

\begin{proposition}
Let $(\bar J_1,\bar J_2)$ be a pair of anti-commuting almost complex structures on $\bar M$, and let $\bar J_{a,b}:=a\bar J_1+b\bar J_2$ with $a^2+b^2=1$, $a,b\in \mathbb R$. Then, for any $X,Y\in \nolinebreak \Gamma(T\bar M)$, we have
$$N_{\bar J_{a,b}}(X,Y)=a^2N_{\bar J_1}(X,Y)+b^2N_{\bar J_2}(X,Y)+ab[\bar J_1,\bar J_2](X,Y).$$
\end{proposition}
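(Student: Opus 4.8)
The plan is to expand $N_{\bar J_{a,b}}$ straight from its definition and collect terms according to their dependence on $a$ and $b$. Since $a$ and $b$ are real constants, the Lie bracket is bilinear over them, so
$$[\bar J_{a,b}X,\bar J_{a,b}Y]=a^2[\bar J_1X,\bar J_1Y]+ab[\bar J_1X,\bar J_2Y]+ab[\bar J_2X,\bar J_1Y]+b^2[\bar J_2X,\bar J_2Y],$$
and likewise $\bar J_{a,b}[\bar J_{a,b}X,Y]$ and $\bar J_{a,b}[X,\bar J_{a,b}Y]$ each split into four summands with coefficients $a^2$, $ab$, $ab$, $b^2$. For the last term of the Nijenhuis tensor I would use the hypothesis $a^2+b^2=1$ together with $\bar J_1^2=\bar J_2^2=-I$ to write $\bar J_{a,b}^2[X,Y]=-(a^2+b^2)[X,Y]=a^2\bar J_1^2[X,Y]+b^2\bar J_2^2[X,Y]$, i.e. to distribute it as an $a^2$-part and a $b^2$-part with no cross term.

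Next I would regroup. The terms carrying $a^2$ are exactly $[\bar J_1X,\bar J_1Y]-\bar J_1[\bar J_1X,Y]-\bar J_1[X,\bar J_1Y]+\bar J_1^2[X,Y]$, which is $N_{\bar J_1}(X,Y)$; similarly the $b^2$-terms assemble into $N_{\bar J_2}(X,Y)$. The remaining summands, all carrying the coefficient $ab$, are
$$[\bar J_1X,\bar J_2Y]+[\bar J_2X,\bar J_1Y]-\bar J_1[\bar J_2X,Y]-\bar J_1[X,\bar J_2Y]-\bar J_2[\bar J_1X,Y]-\bar J_2[X,\bar J_1Y],$$
and comparing with the reduced expression for $[\bar J_1,\bar J_2](X,Y)$ recorded just before the statement — valid precisely because $\bar J_1\bar J_2=-\bar J_2\bar J_1$ annihilates the $\bar J_1\bar J_2[X,Y]+\bar J_2\bar J_1[X,Y]$ summand of the general Fr\"olicher--Nijenhuis bracket — this is exactly $[\bar J_1,\bar J_2](X,Y)$. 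Adding the three groups yields the claimed identity.

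There is no real obstacle here; the computation is routine bookkeeping. The points that must not be overlooked are that the constancy of $a,b$ is what makes the Lie bracket split additively without extra derivative terms $X(a)$, $X(b)$ (contrast the computation in Proposition~\ref{p4}(ii)); that $a^2+b^2=1$ is used only to redistribute the $\bar J_{a,b}^2[X,Y]$ term cleanly between $N_{\bar J_1}$ and $N_{\bar J_2}$; and that the anti-commutativity of $\bar J_1$ and $\bar J_2$ is exactly what matches the cross terms with the Fr\"olicher--Nijenhuis bracket in its reduced form. A useful sanity check is the degenerate case $b=0$ (resp. $a=0$), where the formula collapses to $N_{\bar J_1}$ (resp. $N_{\bar J_2}$).
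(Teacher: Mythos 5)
Your proof is correct and is exactly the ``straightforward computation'' the paper invokes: expand $N_{\bar J_{a,b}}$ using $\mathbb R$-bilinearity of the Lie bracket, split $\bar J_{a,b}^2[X,Y]=-(a^2+b^2)[X,Y]$ into $a^2\bar J_1^2[X,Y]+b^2\bar J_2^2[X,Y]$, and match the $ab$-terms with the reduced Fr\"olicher--Nijenhuis bracket via anti-commutativity. No discrepancy with the paper's (unwritten) argument.
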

\begin{proof}
The relation follows by a straightforward computation, considering the anti-symmetry and the $\mathbb R$-bilinearity of the Lie bracket.
\end{proof}

Now we can state
\begin{proposition}
If $(\bar J_1,\bar J_2)$ is a pair of anti-commuting complex structures on $\bar M$, and $\bar J_{a,b}:=a\bar J_1+b\bar J_2$ with $a^2+b^2=1$, $a,b\in \mathbb R^*$, then, $\bar J_{a,b}$ is integrable if and only if the Fr\"olicher--Nijenhuis bracket of $\bar J_1$ and $\bar J_2$ vanishes.
\end{proposition}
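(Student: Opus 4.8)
The plan is to read the statement straight off the identity established in the preceding Proposition, so very little work remains. First I would note that, since $a^2+b^2=1$, part~(i) of Proposition~\ref{p4} already guarantees that $\bar J_{a,b}$ is an almost complex structure on $\bar M$; hence it makes sense to ask whether $\bar J_{a,b}$ is integrable, i.e.\ whether it is a complex structure. Next I would feed the hypothesis of the present statement into the formula
$$N_{\bar J_{a,b}}(X,Y)=a^2N_{\bar J_1}(X,Y)+b^2N_{\bar J_2}(X,Y)+ab[\bar J_1,\bar J_2](X,Y)$$
proved in the previous Proposition. Because $\bar J_1$ and $\bar J_2$ are assumed to be complex structures, $N_{\bar J_1}\equiv 0$ and $N_{\bar J_2}\equiv 0$, so the first two terms vanish and we are left with
$$N_{\bar J_{a,b}}(X,Y)=ab\,[\bar J_1,\bar J_2](X,Y)$$
for all $X,Y\in\Gamma(T\bar M)$.

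The conclusion is then immediate from the scalar factor. Since $a,b\in\mathbb R^*$, the real number $ab$ is nonzero, so the two $(1,2)$-tensor fields $N_{\bar J_{a,b}}$ and $[\bar J_1,\bar J_2]$ differ only by this nonvanishing constant; consequently one is identically zero precisely when the other is. This is exactly the claimed equivalence: $\bar J_{a,b}$ is integrable if and only if the Fr\"olicher--Nijenhuis bracket $[\bar J_1,\bar J_2]$ vanishes.

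There is essentially no obstacle here — all the substance has been front-loaded into the Nijenhuis-versus-Fr\"olicher--Nijenhuis identity of the earlier Proposition, and the final step is a one-line observation about a nonzero scalar. The only point I would flag explicitly is that the hypothesis $a,b\neq 0$ is genuinely needed: if, say, $b=0$ then $a=\pm1$ and $\bar J_{a,b}=\pm\bar J_1$ is automatically integrable, regardless of whether $[\bar J_1,\bar J_2]$ vanishes, so the equivalence would break down. I would also remark, without elaborating, that the analogous reasoning applied to $\bar J_{a_1,\dots,a_k}=\sum_{i=1}^k a_i\bar J_i$ (with $\sum a_i^2=1$ and all $a_i\neq 0$) suggests that, for $k$ pairwise anti-commuting complex structures, integrability of $\bar J_{a_1,\dots,a_k}$ should be governed by the vanishing of all the mixed brackets $[\bar J_i,\bar J_j]$, $1\le i<j\le k$.
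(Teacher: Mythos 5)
Your argument is correct and is exactly the route the paper intends: the proposition is stated as an immediate consequence of the preceding formula $N_{\bar J_{a,b}}=a^2N_{\bar J_1}+b^2N_{\bar J_2}+ab[\bar J_1,\bar J_2]$, which reduces to $N_{\bar J_{a,b}}=ab[\bar J_1,\bar J_2]$ when $N_{\bar J_1}=N_{\bar J_2}=0$, and the nonvanishing of $ab$ gives the equivalence. Your remark on why $a,b\in\mathbb R^*$ is needed is a worthwhile addition that the paper leaves implicit.
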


\section{Pointwise slant submanifolds with respect to a family of almost Hermitian manifolds}

Let $M$ be an immersed submanifold of an almost Hermitian manifold $(\bar M,\bar g,\bar J_i)$, $i=\nolinebreak {1,2}$. Throughout the paper, we shall assume all the immersions to be injective. Also, we shall use the same notation for a metric on a manifold as for the induced metric on a submanifold.

We have the orthogonal decomposition
$$T\bar M=TM\oplus T^{\bot} M$$
and, for any $X\in \Gamma(TM)$, we denote
$$\bar J_iX=T_iX+N_iX, \ \ i={1,2},$$
where $T_iX$ and $N_iX$ represent the tangential and the normal component of $\bar J_iX$, respectively.

We recall \cite{chen5,etayo} that $M$ is called a \textit{pointwise slant submanifold} of $(\bar M,\bar g,\bar J_i)$ with the slant function $\theta_i$, $\theta_i(x)\in (0,\frac{\pi}{2}]$ for any $x\in M$, if,
for any $X\in\nolinebreak \Gamma(TM)$ with $X_x\neq 0$, the angle between $\bar J_iX_x$ and $T_xM$ does not depend on the nonzero tangent vector $X_x$, but only on the point $x$ of $M$.
In this case, for any $X\in \Gamma(TM)$, we have
$$\Vert T_iX\Vert^2=\cos^2\theta_i\Vert X\Vert^2.$$
Moreover, if $\theta_i(x)\neq \frac{\pi}{2}$ for any $x\in M$, then $M$ is called a \textit{proper pointwise slant submanifold}.
We remark \cite{latcu} that the slant function of a proper pointwise slant submanifold is smooth.

Furthermore, if the angle between $\bar J_iX_x$ and $T_x M$ does not depend on the nonzero tangent vector $X_x$, neither on the point $x$ of $M$, then $M$ is called a \textit{slant submanifold} of $(\bar M,\bar g,\bar J_i)$ with the slant angle $\theta_i$ (respectively, a \textit{proper slant submanifold} if $\theta_i\neq \frac{\pi}{2}$, and an \textit{anti-invariant submanifold} if $\theta_i=\frac{\pi}{2}$).

\bigskip

The necessary and sufficient condition for a pointwise slant submanifold with respect to $(\bar g,\bar J_1)$ and to $(\bar g,\bar J_2)$, to be also pointwise slant with respect to $(\bar g,\bar J_{a,b})$, will be provided by the next theorem.
\begin{theorem}\label{p5}
Let $(\bar g,\bar J_i)$, $i={1,2}$, be two anti-commuting almost Her\-mi\-tian structures on $\bar M$, let $M$ be a pointwise slant submanifold of $(\bar M,\bar g,\bar J_i)$, with the slant function $\theta_i$, $i={1,2}$, and let
$\bar J_{a,b}:=a\bar J_1+b\bar J_2$ for $a$ and $b$ smooth functions on $\bar M$ with $a^2+b^2=1$.
Then, $M$ is a pointwise slant submanifold of the almost Hermitian manifold $(\bar M,\bar g,\bar J_{a,b})$ if and only if,
for any $x\in M$ with $a(x)b(x)\neq 0$, $\bar g(T_1u,T_2u)$ does not depend on the unitary tangent vector $u\in T_xM$.

In this case, the slant function $\theta$ is given by
$$\theta(x)=\arccos\Big(\sqrt{a^2(x)\cos^2\theta_1(x)+b^2(x)\cos^2\theta_2(x)+2a(x)b(x)\bar g(T_1u,T_2u)}\Big)$$
for any $x\in M$ and any $u\in T_xM$ with $\Vert u\Vert=1$.
\end{theorem}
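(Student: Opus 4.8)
The plan is to compute $\|T_{a,b}u\|^2$ for a unit tangent vector $u\in T_xM$, where $T_{a,b}$ denotes the tangential part of $\bar J_{a,b}$ restricted to $TM$, and to recognize that $M$ is pointwise slant with respect to $(\bar g,\bar J_{a,b})$ precisely when this quantity is independent of the choice of unit vector $u$. Indeed, by the characterization recalled just before the theorem, pointwise slantness is equivalent to $\|T_{a,b}u\|^2 = \cos^2\theta(x)$ being a well-defined function of $x$ alone; so the whole proof reduces to unpacking this norm.

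First I would note that, since $\bar J_{a,b}X = a\bar J_1X + b\bar J_2X$ and taking tangential parts is linear, $T_{a,b}X = aT_1X + bT_2X$ for every $X\in\Gamma(TM)$. Hence for a unit vector $u$,
\begin{align*}
\|T_{a,b}u\|^2 &= a^2\|T_1u\|^2 + b^2\|T_2u\|^2 + 2ab\,\bar g(T_1u,T_2u)\\
&= a^2\cos^2\theta_1(x) + b^2\cos^2\theta_2(x) + 2a(x)b(x)\,\bar g(T_1u,T_2u),
\end{align*}
where in the second line I used that $M$ is pointwise slant with respect to each $\bar J_i$, so $\|T_iu\|^2=\cos^2\theta_i(x)$ for any unit $u$. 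The first two summands depend only on $x$; therefore $\|T_{a,b}u\|^2$ is independent of $u$ if and only if $a(x)b(x)\,\bar g(T_1u,T_2u)$ is. When $a(x)b(x)\neq 0$ this is equivalent to $\bar g(T_1u,T_2u)$ being independent of the unit vector $u\in T_xM$; when $a(x)b(x)=0$ the term vanishes identically and imposes no condition at such points (there $\bar J_{a,b}$ equals $\pm\bar J_1$ or $\pm\bar J_2$ up to sign, which is already pointwise slant). This yields exactly the stated necessary and sufficient condition, and the displayed formula for $\theta(x)$ follows by taking $\arccos$ of the square root of $\|T_{a,b}u\|^2$, after one checks this quantity lies in $(0,1]$ so that the angle lies in $(0,\tfrac{\pi}{2}]$.

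The only genuinely delicate point is the bookkeeping at points where $a(x)b(x)=0$: one must make sure the "if and only if" is stated as a condition that is vacuous there, and argue that $M$ is automatically pointwise slant with respect to $\bar J_{a,b}$ near such points because $a^2+b^2=1$ forces $\bar J_{a,b}$ to coincide with one of $\pm\bar J_1,\pm\bar J_2$, whose tangential operators have the same norm as $T_1$ or $T_2$. A secondary subtlety is verifying that the expression under the square root is strictly positive — this is needed for $\theta(x)\le\tfrac{\pi}{2}$ and follows from $\bar g(T_{a,b}u,T_{a,b}u)=\|T_{a,b}u\|^2 \le \|\bar J_{a,b}u\|^2 = \|u\|^2 = 1$ together with the fact that $\theta_1,\theta_2$ take values in $(0,\tfrac{\pi}{2}]$, which I would invoke to rule out the degenerate case and conclude $\theta(x)\in(0,\tfrac{\pi}{2}]$. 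Everything else is the routine linear-algebra computation above; I expect no real obstacle beyond this case analysis.
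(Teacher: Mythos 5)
Your proposal is correct and follows essentially the same route as the paper: decompose $T_{a,b}=aT_1+bT_2$, expand $\Vert T_{a,b}u\Vert^2=a^2\cos^2\theta_1+b^2\cos^2\theta_2+2ab\,\bar g(T_1u,T_2u)$ for unit $u$, and observe that independence of $u$ reduces to the cross term, which is vacuous where $a(x)b(x)=0$. The only quibble is your remark that strict positivity of the expression under the square root is needed for $\theta(x)\le\frac{\pi}{2}$ — the relevant containment $\theta(x)\in(0,\frac{\pi}{2}]$ actually requires the quantity to lie in $[0,1)$, but this is a cosmetic slip that does not affect the argument.
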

\begin{proof}
For any $X\in \Gamma(TM)$, we have
$$\Vert T_iX\Vert^2=\cos^2\theta_i\Vert X\Vert^2, \ \ i={1,2}.$$

Let $\bar J_{a,b}X=T_{a,b}X+N_{a,b}X$ with $T_{a,b}X\in \Gamma(TM)$ and $N_{a,b}X\in \Gamma(T^{\bot}M)$. Then,
$$\left\{
    \begin{array}{ll}
      T_{a,b}X=aT_1X+bT_2X \\
      N_{a,b}X=aN_1X+bN_2X
    \end{array},
  \right.$$
which imply
\begin{align}
\Vert T_{a,b}X\Vert^2&=a^2\Vert T_1X\Vert^2+b^2\Vert T_2X\Vert^2+2ab \cdot \bar g(T_1X,T_2X)\notag\\
&=(a^2\cos^2\theta_1+b^2\cos^2\theta_2)\Vert X\Vert^2+2ab \cdot \bar g(T_1X,T_2X).\notag
\end{align}

If $M$ is a pointwise slant submanifold of $(\bar M,\bar g,\bar J_{a,b})$, with the slant function $\theta$, then, for any
$x\in M$ with $a(x)b(x)\neq 0$, and any $v\in T_xM\setminus\{0\}$, we have
\begin{align}
\cos^2\theta(x) \Vert v\Vert^2&=\Vert T_{a,b}v\Vert^2&\notag\\
&\hspace{-20pt}=\left(a^2(x)\cos^2\theta_1(x)+b^2(x)\cos^2\theta_2(x)+2a(x)b(x)\frac{\bar g(T_1v,T_2v)}{\Vert v\Vert^2}\right)\Vert v\Vert^2,\notag
\end{align}
from where, for any unitary tangent vector $u\in T_xM$,
\begin{align}
\cos^2\theta(x)&=a^2(x)\cos^2\theta_1(x)+b^2(x)\cos^2\theta_2(x)+2a(x)b(x)\bar g(T_1u,T_2u),\notag
\end{align}
and so $\bar g(T_1u,T_2u)$ does not depend on $u$.
Conversely, if, for any $x\in M$ with $a(x)b(x)\neq \nolinebreak 0$, $\bar g(T_1u,T_2u)$ does not depend on the unitary tangent vector $u\in T_xM$, then, for any $x\in M$ and any $v\in T_xM\setminus\{0\}$, $a(x)b(x)\frac{\displaystyle \bar g(T_1v,T_2v)}{\displaystyle \Vert v\Vert^2}$ does not depend on \nolinebreak $v$, so
\begin{align*}
\frac{\Vert T_{a,b}v\Vert^2}{\Vert J_{a,b}v\Vert^2}&=\frac{\Vert T_{a,b}v\Vert^2}{\Vert v\Vert^2}&\notag\\
&\hspace{-20pt}=a^2(x)\cos^2\theta_1(x)+b^2(x)\cos^2\theta_2(x)+2a(x)b(x)\frac{\bar g(T_1v,T_2v)}{\Vert v\Vert^2},\notag
\end{align*}
does not depend on $v\in T_xM\setminus\{0\}$; hence, $M$ is a pointwise slant submanifold of $(\bar M,\bar g,\bar J_{a,b})$.
\end{proof}

\begin{example} \label{e1}
Let $\bar J_1$ and $\bar J_2$ be two anti-commuting and $\bar g_0$-skew symmetric almost complex structures on the $8$-dimensional Euclidean space $\mathbb R^{8}$ (with the Euclidean metric $\bar g_0$), defined by
$$\bar J_1\Big(\frac{\partial}{\partial u_i}\Big)=-\frac{\partial}{\partial u_{i+2}}, \ \
\bar J_1\Big(\frac{\partial}{\partial u_{i+2}}\Big)=\frac{\partial}{\partial u_i},$$
$$\bar J_1\Big(\frac{\partial}{\partial v_i}\Big)=-\frac{\partial}{\partial v_{i+2}}, \ \
\bar J_1\Big(\frac{\partial}{\partial v_{i+2}}\Big)=\frac{\partial}{\partial v_i}$$
for $i={1,2}$, and by
$$\bar J_2\Big(\frac{\partial}{\partial u_i}\Big)=-\frac{\partial}{\partial v_i}, \ \
\bar J_2\Big(\frac{\partial}{\partial v_i}\Big)=\frac{\partial}{\partial u_i}$$
for $i={1,2}$, and
$$\bar J_2\Big(\frac{\partial}{\partial u_i}\Big)=\frac{\partial}{\partial v_i}, \ \
\bar J_2\Big(\frac{\partial}{\partial v_i}\Big)=-\frac{\partial}{\partial u_i}$$
for $i={3,4}$,
where we denoted by $(u_1,v_1,\dots,u_4,v_4)$ the canonical coordinates in $\mathbb R^8$. Let $M$ be the immersed submanifold of $\mathbb R^{8}$ defined by
$$F:\mathbb R^2\rightarrow \mathbb R^8, \ \ F(x_1,x_2):= \Big(2x_{1}, x_1,x_1^2, x_1+x_2, x_1-x_2,2x_2, x_2,x_2^2\Big).$$

Then, $TM$ is spanned by the orthogonal vector fields:
$$X_{1} =2\frac{\partial }{\partial u_{1}}+\frac{\partial
}{\partial v_{1}}+2x_1\frac{\partial }{\partial u_{2}}+\frac{\partial }{\partial v_{2}}+\frac{\partial }{\partial u_{3}},$$
$$X_{2} =\frac{\partial }{\partial v_{2}}-\frac{\partial }{\partial u_{3}}+2\frac{\partial
}{\partial v_{3}}+\frac{\partial }{\partial u_{4}}+2x_2\frac{\partial }{\partial v_{4}}.$$

Applying $\bar J_1$ and $\bar J_2$ to the base vector fields of $TM$, we get:
\begin{align}
\bar J_1 X_{1} &=\frac{\partial }{\partial u_{1}}-2\frac{\partial }{\partial u_{3}}-\frac{\partial
}{\partial v_{3}}-2x_1\frac{\partial }{\partial u_{4}}-\frac{\partial }{\partial v_{4}},\notag \\
\bar J_1 X_{2} &=-\frac{\partial }{\partial u_{1}}+2\frac{\partial
}{\partial v_{1}}+\frac{\partial }{\partial u_{2}}+2x_2\frac{\partial }{\partial v_{2}}-\frac{\partial }{\partial v_{4}},\notag \\
\bar J_2X_{1} &=\frac{\partial }{\partial u_{1}}-2\frac{\partial }{\partial v_{1}}+\frac{\partial }{\partial u_{2}}-2x_{1}\frac{\partial }{\partial v_{2}}+\frac{\partial }{\partial v_{3}},\notag \\
\bar J_2X_{2} &=\frac{\partial }{\partial u_{2}}-2\frac{\partial }{\partial u_{3}}-\frac{\partial }{\partial v_{3}}-2x_2\frac{\partial }{\partial
u_4}+\frac{\partial }{\partial v_{4}}.\notag
\end{align}

Then,
$$\Vert X_1\Vert=\Vert \bar J_1X_1\Vert=\Vert \bar J_2X_1\Vert=\sqrt{4x_1^2+7},$$
$$\Vert X_2\Vert=\Vert \bar J_1X_2\Vert=\Vert \bar J_2X_2\Vert=\sqrt{4x_2^2+7},$$
$$\frac{\left\vert \bar g_0(\bar J_1 X_1,X_2) \right\vert }{%
\left\Vert \bar J_1 X_1\right\Vert \cdot\left\Vert X_{2}\right\Vert }=\frac{2\vert x_1+x_2\vert }{\sqrt{\left(4x_1^2+7\right)\left(4x_2^2+7\right)}}=\frac{\left\vert \bar g_0(\bar J_1 X_2,X_1) \right\vert }{\left\Vert \bar J_1 X_2\right\Vert \cdot\left\Vert X_1\right\Vert };$$
hence, $M$ is a pointwise slant submanifold of $(\mathbb R^8, \bar g_0,\bar J_1)$, with the slant function $$\theta_{1}(x_{1},x_{2}) =\arccos \left(\frac{2\vert x_1+x_2\vert}{\sqrt{\left(4x_1^2+7\right)\left(4x_2^2+7\right)}}\right).$$

On the other hand,
$$\frac{\left\vert \bar g_0(\bar J_2 X_1,X_2) \right\vert }{%
\left\Vert \bar J_2 X_1\right\Vert \cdot\left\Vert X_{2}\right\Vert }=\frac{2\vert x_1-1\vert }{\sqrt{\left(4x_1^2+7\right)\left(4x_2^2+7\right)}}=\frac{\left\vert \bar g_0(\bar J_2 X_2,X_1) \right\vert }{\left\Vert \bar J_2 X_2\right\Vert \cdot\left\Vert X_1\right\Vert };$$
hence, $M$ is a pointwise slant submanifold of $(\mathbb R^8, \bar g_0,\bar J_2)$, with the slant function $$\theta_{2}(x_{1},x_{2}) =\arccos \left(\frac{2\vert x_1-1\vert}{\sqrt{\left(4x_1^2+7\right)\left(4x_2^2+7\right)}}\right).$$

We define $\bar J_{a,b}:=a\bar J_1+b\bar J_2$ for $a^2+b^2=1$. Then,
\begin{align}
\bar J_{a,b} X_{1} &=(a+b)\frac{\partial }{\partial u_{1}}-2b\frac{\partial
}{\partial v_{1}}+b\frac{\partial }{\partial u_{2}}-2bx_1\frac{\partial }{\partial v_{2}}\notag \\
&\hspace{10pt}-2a\frac{\partial
}{\partial u_{3}}+(b-a)\frac{\partial }{\partial v_{3}}
-2ax_1\frac{\partial }{\partial u_{4}}-a\frac{\partial }{\partial v_{4}},\notag
\end{align}
\begin{align}
\bar J_{a,b} X_{2} &=-a\frac{\partial }{\partial u_{1}}+2a\frac{\partial
}{\partial v_{1}}+(a+b)\frac{\partial }{\partial u_{2}}+2ax_2\frac{\partial }{\partial v_{2}}\notag \\
&\hspace{10pt}-2b\frac{\partial
}{\partial u_{3}}-b\frac{\partial }{\partial v_{3}}-2bx_2\frac{\partial }{\partial u_{4}}+(b-a)\frac{\partial }{\partial v_{4}},\notag
\end{align}
$$\Vert \bar J_{a,b}X_1\Vert=\sqrt{4x_1^2+7}, \ \ \Vert \bar J_{a,b}X_2\Vert=\sqrt{4x_2^2+7},$$
$$\frac{\left\vert \bar g_0(\bar J_{a,b} X_1,X_2) \right\vert }{%
\left\Vert \bar J_{a,b} X_1\right\Vert \cdot\left\Vert X_{2}\right\Vert }=\frac{2\vert a(x_1+x_2)+b(x_1-1)\vert }{\sqrt{\left(4x_1^2+7\right)\left(4x_2^2+7\right)}}=\frac{\left\vert \bar g_0(\bar J_{a,b} X_2,X_1) \right\vert }{\left\Vert \bar J_{a,b} X_2\right\Vert \cdot\left\Vert X_1\right\Vert };$$
hence, $M$ is a pointwise slant submanifold of $(\mathbb R^8, \bar g_0,\bar J_{a,b})$, with the slant function $$\theta(x_{1},x_{2}) =\arccos \left(\frac{2\vert a(x_1+x_2)+b(x_1-1)\vert }{\sqrt{\left(4x_1^2+7\right)\left(4x_2^2+7\right)}}\right),$$
and we notice that
$$\cos\theta=\sqrt{a^2\cos^2\theta_1+b^2\cos^2\theta_2+2ab\cdot \frac{4(x_1+x_2)(x_1-1)}{\left(4x_1^2+7\right)\left(4x_2^2+7\right)}}.$$
\end{example}

\begin{corollary}\label{c9}
Under the hypothesis of Theorem \ref{p5}, if $\bar g(T_1X,T_2X)=0$ for any $X\in  \Gamma(TM)$,
then $M$ is a pointwise slant submanifold of
$(\bar M,\bar g,\bar J_{a,b})$, with the slant function
$$\theta=\arccos\Big(\sqrt{a^2\cos^2\theta_1+b^2\cos^2\theta_2}\Big).$$
\end{corollary}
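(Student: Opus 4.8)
The plan is to obtain Corollary \ref{c9} as an immediate specialization of Theorem \ref{p5}. First I would note that the extra hypothesis $\bar g(T_1X,T_2X)=0$ for every $X\in\Gamma(TM)$ forces, in particular, $\bar g(T_1u,T_2u)=0$ for every $x\in M$ and every unit vector $u\in T_xM$; being identically zero, this quantity certainly does not depend on $u$. Consequently the necessary and sufficient condition of Theorem \ref{p5} is satisfied at every point (in particular at those $x$ with $a(x)b(x)\neq 0$), so $M$ is a pointwise slant submanifold of $(\bar M,\bar g,\bar J_{a,b})$.

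It then remains only to identify the slant function. Substituting $\bar g(T_1u,T_2u)=0$ into the expression provided by Theorem \ref{p5},
$$\theta(x)=\arccos\Big(\sqrt{a^2(x)\cos^2\theta_1(x)+b^2(x)\cos^2\theta_2(x)+2a(x)b(x)\,\bar g(T_1u,T_2u)}\Big),$$
the cross term disappears and one is left with $\theta=\arccos\big(\sqrt{a^2\cos^2\theta_1+b^2\cos^2\theta_2}\big)$. Equivalently, one can run the computation directly: from $T_{a,b}X=aT_1X+bT_2X$ together with the orthogonality $\bar g(T_1X,T_2X)=0$ one gets $\Vert T_{a,b}X\Vert^2=a^2\Vert T_1X\Vert^2+b^2\Vert T_2X\Vert^2=(a^2\cos^2\theta_1+b^2\cos^2\theta_2)\Vert X\Vert^2$, which reads off the Wirtinger angle of $\bar J_{a,b}$ at once.

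I do not expect any real obstacle, the statement being a direct corollary; the single point deserving a line of care is the behaviour at the points $x\in M$ where $a(x)b(x)=0$, where the criterion of Theorem \ref{p5} is vacuous. At such a point, say $b(x)=0$, one has $a(x)^2=1$, so $\bar J_{a,b}$ coincides with $\pm\bar J_1$ and the slant angle there equals $\theta_1(x)$; since $\theta_1(x)\in(0,\frac{\pi}{2}]$ gives $\cos\theta_1(x)\geq 0$ and $\vert a(x)\vert=1$, the right-hand side $\arccos\big(\sqrt{a^2(x)\cos^2\theta_1(x)}\big)=\arccos\big(\cos\theta_1(x)\big)=\theta_1(x)$ matches, so the announced formula is valid on all of $M$.
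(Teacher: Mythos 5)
Your proposal is correct and follows exactly the route the paper intends: Corollary \ref{c9} is stated as an immediate specialization of Theorem \ref{p5}, obtained by setting the cross term $\bar g(T_1u,T_2u)$ to zero in both the criterion and the angle formula. Your additional check at the points where $a(x)b(x)=0$ is a sensible extra line of care, not a deviation from the paper's argument.
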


\begin{remark} Under the hypothesis of Theorem \ref{p5}:

(i) if $\theta_1=\theta_2=\frac{\pi}{2}$, then $\theta=\frac{\pi}{2}$;

(ii) if $\bar g(T_1X,T_2X)=0$ for any $X\in \Gamma(TM)$,
since $$\min \{\cos^2\theta_1,\cos^2\theta_2\}\leq a^2\cos^2\theta_1+(1-a^2)\cos^2\theta_2\leq \max \{\cos^2\theta_1,\cos^2\theta_2\},$$ we notice that
$$\min \{\theta_1,\theta_2\} \leq \theta \leq \max \{\theta_1,\theta_2\}.$$
\end{remark}

For $\theta_2=\frac{\pi}{2}$, we get
\begin{corollary}\label{c1}
Let $(\bar g,\bar J_i)$, $i={1,2}$, be two anti-commuting almost Her\-mi\-tian structures on $\bar M$, let $M$ be a pointwise slant submanifold of $(\bar M,\bar g,\bar J_1)$, with the slant function $\theta_1$, and let $M$ be an anti-invariant submanifold of $(\bar M,\bar g,\bar J_2)$. Then, $M$ is a pointwise slant submanifold of the almost Hermitian manifold $(\bar M,\bar g,\bar J_{a,b})$, with the slant function $$\theta=\arccos\Big(|a|\cos\theta_1\Big),$$ where $\bar J_{a,b}:=a\bar J_1+b\bar J_2$ for $a$ and $b$ smooth functions on $\bar M$ with $a^2+b^2=1$.
\end{corollary}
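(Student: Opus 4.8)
The plan is to derive this directly from Theorem~\ref{p5} by specializing to the case $\theta_2\equiv\frac{\pi}{2}$. First I would record that, since $M$ is anti-invariant in $(\bar M,\bar g,\bar J_2)$, for every $X\in\Gamma(TM)$ the vector $\bar J_2X$ is normal to $M$, so its tangential component vanishes: $T_2X=0$ (equivalently, $\Vert T_2X\Vert^2=\cos^2\frac{\pi}{2}\Vert X\Vert^2=0$). Consequently, for every $x\in M$ and every unitary $u\in T_xM$ we get $\bar g(T_1u,T_2u)=0$, which is in particular a quantity that does not depend on $u$. Hence the necessary and sufficient condition in Theorem~\ref{p5} is fulfilled (trivially so, in particular at the points where $a(x)b(x)\neq 0$), and $M$ is a pointwise slant submanifold of $(\bar M,\bar g,\bar J_{a,b})$.

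Next I would substitute these two facts, $\cos^2\theta_2=0$ and $\bar g(T_1u,T_2u)=0$, into the slant-function formula supplied by Theorem~\ref{p5}. This collapses it to
$$\theta(x)=\arccos\Big(\sqrt{a^2(x)\cos^2\theta_1(x)}\Big)=\arccos\big(|a(x)|\cos\theta_1(x)\big),$$
where I used $\sqrt{a^2}=|a|$ together with $\cos\theta_1\geq 0$ (which holds because $\theta_1(x)\in(0,\frac{\pi}{2}]$). Finally, to confirm that this indeed defines a slant function in the admissible range, I would note that $|a(x)|\leq 1$ and $0\leq\cos\theta_1(x)<1$ give $|a(x)|\cos\theta_1(x)\in[0,1)$, so that $\theta(x)\in(0,\frac{\pi}{2}]$ for every $x\in M$.

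There is essentially no obstacle here: the only point that warrants a line of care is the behaviour at the zeros of $a$ or $b$, but these are already outside the scope of the independence hypothesis of Theorem~\ref{p5}, whereas its slant-function formula holds at every point of $M$; one may also check by hand that the formula returns $\theta_1$ at points where $b=0$ (there $\bar J_{a,b}=\pm\bar J_1$) and $\frac{\pi}{2}$ at points where $a=0$ (there $\bar J_{a,b}=\pm\bar J_2$), consistently with $M$ being pointwise slant, resp.\ anti-invariant, with respect to those structures.
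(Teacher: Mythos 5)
Your proof is correct and follows exactly the route the paper intends: the corollary is stated as the specialization of Theorem~\ref{p5} (via Corollary~\ref{c9}) to $\theta_2=\frac{\pi}{2}$, where anti-invariance gives $T_2=0$, hence $\bar g(T_1u,T_2u)=0$ and the slant-function formula collapses to $\arccos(|a|\cos\theta_1)$. Your additional checks (range of $\theta$, behaviour at zeros of $a$ or $b$) are sound and merely make explicit what the paper leaves implicit.
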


\begin{remark}
Under the hypothesis of Corollary \ref{c1},
since $|a|\cos\theta_1\leq \cos\theta_1$, we notice that $\theta\geq  \theta_1$.
\end{remark}

\begin{example}
Let $(\bar g_0,\bar J_1)$ and $(\bar g_0,\bar J_2)$ be the almost Hermitian structures on the $8$-dimensional Euclidean space $\mathbb R^{8}$ defined in Example \ref{e1}. We will denote by $(u_1,v_1,\dots,u_4,v_4)$ the canonical coordinates in $\mathbb R^{8}$. Let $M$ be the immersed submanifold of $\mathbb R^{8}$ defined by
$$F:\mathbb R^2\rightarrow \mathbb R^8, \ \ F(x_1,x_2):= \Big(2x_{1}, x_1,x_1^2, 1, 2x_2, x_2,x_2^2,1\Big).$$

Then, $TM$ is spanned by the orthogonal vector fields:
$$X_{1} =2\frac{\partial }{\partial u_{1}}+\frac{\partial
}{\partial v_{1}}+2x_1\frac{\partial }{\partial u_{2}}, \ \
X_{2} =2\frac{\partial }{\partial u_{3}}+\frac{\partial
}{\partial v_{3}}+2x_2\frac{\partial }{\partial u_{4}}.$$

Applying $\bar J_1$ and $\bar J_2$, respectively, to the base vector fields of $TM$, we get:
$$\bar J_1 X_{1} =-2\frac{\partial }{\partial u_{3}}-\frac{\partial
}{\partial v_{3}}-2x_1\frac{\partial }{\partial u_{4}}, \ \
\bar J_1 X_{2} =2\frac{\partial }{\partial u_{1}}+\frac{\partial
}{\partial v_{1}}+2x_2\frac{\partial }{\partial u_{2}},$$
$$\bar J_2X_{1} =\frac{\partial }{\partial u_{1}}-2\frac{\partial }{\partial v_{1}}-2x_{1}\frac{\partial }{\partial v_{2}}, \ \
\bar J_2X_{2} =-\frac{\partial }{\partial u_{3}}+2\frac{\partial }{\partial v_{3}}+2x_2\frac{\partial }{\partial
v_4}.$$

Then,
$$\Vert X_1\Vert=\Vert \bar J_1X_1\Vert=\Vert \bar J_2X_1\Vert=\sqrt{4x_1^2+5},$$
$$\Vert X_2\Vert=\Vert \bar J_1X_2\Vert=\Vert \bar J_2X_2\Vert=\sqrt{4x_2^2+5},$$
$$\frac{\left\vert \bar g_0(\bar J_1 X_1,X_2) \right\vert }{%
\left\Vert \bar J_1 X_1\right\Vert \cdot\left\Vert X_{2}\right\Vert }=\frac{\vert 4x_1x_2+5\vert }{\sqrt{\left(4x_1^2+5\right)\left(4x_2^2+5\right)}}=\frac{\left\vert \bar g_0(\bar J_1 X_2,X_1) \right\vert }{\left\Vert \bar J_1 X_2\right\Vert \cdot\left\Vert X_1\right\Vert };$$
hence, $M$ is a pointwise slant submanifold of $(\mathbb R^8, \bar g_0,\bar J_1)$, with the slant function $$\theta_{1}(x_{1},x_{2}) =\arccos \left(\frac{\vert 4x_1x_2+5\vert}{\sqrt{\left(4x_1^2+5\right)\left(4x_2^2+5\right)}}\right).$$

On the other hand,
$$\bar g_0(\bar J_2 X_1,X_2) =0=\bar g_0(\bar J_2 X_2,X_1);$$
hence, $M$ is an anti-invariant submanifold of $(\mathbb R^8, \bar g_0,\bar J_2)$.

We define $\bar J_{a,b}:=a\bar J_1+b\bar J_2$ for $a^2+b^2=1$. Then,
$$\bar J_{a,b} X_{1} =b\frac{\partial }{\partial u_{1}}-2b\frac{\partial
}{\partial v_{1}}-2bx_1\frac{\partial }{\partial v_{2}}-2a\frac{\partial
}{\partial u_{3}}-a\frac{\partial
}{\partial v_{3}}-2ax_1\frac{\partial
}{\partial u_{4}},$$
$$\bar J_{a,b} X_{2} =2a\frac{\partial }{\partial u_{1}}+a\frac{\partial
}{\partial v_{1}}+2ax_2\frac{\partial }{\partial u_{2}}-b\frac{\partial
}{\partial u_{3}}+2b\frac{\partial
}{\partial v_{3}}+2bx_2\frac{\partial
}{\partial v_{4}},$$
$$\Vert \bar J_{a,b}X_1\Vert=\sqrt{4x_1^2+5}, \ \ \Vert \bar J_{a,b}X_2\Vert=\sqrt{4x_2^2+5},$$
$$\frac{\left\vert \bar g_0(\bar J_{a,b} X_1,X_2) \right\vert }{%
\left\Vert \bar J_{a,b} X_1\right\Vert \cdot\left\Vert X_{2}\right\Vert }=\frac{\vert a\vert\cdot\vert 4x_1x_2+5\vert }{\sqrt{\left(4x_1^2+5\right)\left(4x_2^2+5\right)}}=\frac{\left\vert \bar g_0(\bar J_{a,b} X_2,X_1) \right\vert }{\left\Vert \bar J_{a,b} X_2\right\Vert \cdot\left\Vert X_1\right\Vert };$$
hence, $M$ is a pointwise slant submanifold of $(\mathbb R^8, \bar g_0,\bar J_{a,b})$, with the slant function $$\theta(x_{1},x_{2}) =\arccos \left(\vert a\vert\cdot\frac{\vert 4x_1x_2+5\vert}{\sqrt{\left(4x_1^2+5\right)\left(4x_2^2+5\right)}}\right).$$
\end{example}

More general, we have
\begin{theorem}\label{p6}
Let $(\bar g,\bar J_i)$, $i=\overline{1,k}$, be $k$ pairwise anti-commuting almost Hermitian structures on $\bar M$, let
$M$ be a pointwise slant submanifold of $(\bar M,\bar g,\bar J_i)$, with the slant function $\theta_i$, $i=\overline{1,k}$, and let
$\bar J_{a_1,\dots,a_k}:=\sum_{i=1}^ka_i\bar J_i$ for $a_i$, $i=\overline{1,k}$, smooth functions on $\bar M$ with $\sum_{i=1}^ka_i^2=1$.
Then, $M$ is a pointwise slant submanifold of the almost Hermitian manifold
$(\bar M,\bar g,\bar J_{a_1,\dots,a_k})$ if and only if,
for any $x\in M$,
$\sum_{1\leq i<j\leq k}a_i(x)a_j(x)\bar g(T_iu,T_ju)$ does not depend on the unitary tangent vector $u\in \nolinebreak T_xM$.

In this case, the slant function $\theta$ is given by
$$\theta(x)=\arccos\left(\sqrt{\sum_{i=1}^ka_i^2(x)\cos^2\theta_i(x)+2\sum_{1\leq i<j\leq k}a_i(x)a_j(x)\bar g(T_iu,T_ju)}\right)$$
for any $x\in M$ and any $u\in T_xM$ with $\Vert u\Vert=1$.
\end{theorem}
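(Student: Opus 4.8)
The plan is to mimic, \emph{mutatis mutandis}, the argument used for Theorem~\ref{p5}, replacing the two-term expansion by a $k$-term one. First I would decompose $\bar J_i X = T_i X + N_i X$ for each $i = \overline{1,k}$ and $X \in \Gamma(TM)$, and observe that since the tangential and normal projections are linear,
\begin{align}
T_{a_1,\dots,a_k} X &= \sum_{i=1}^k a_i T_i X, \notag\\
N_{a_1,\dots,a_k} X &= \sum_{i=1}^k a_i N_i X, \notag
\end{align}
where $\bar J_{a_1,\dots,a_k} X = T_{a_1,\dots,a_k} X + N_{a_1,\dots,a_k} X$ is the tangent–normal split. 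By the previous proposition, $\bar J_{a_1,\dots,a_k}$ is a $\bar g$-skew-symmetric almost complex structure, so $\|\bar J_{a_1,\dots,a_k} v\| = \|v\|$, which is what makes the Wirtinger angle well-defined.

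Next I would expand the squared norm of the tangential component:
\begin{align}
\|T_{a_1,\dots,a_k} X\|^2 &= \sum_{i=1}^k a_i^2 \|T_i X\|^2 + 2\sum_{1 \leq i < j \leq k} a_i a_j\, \bar g(T_i X, T_j X) \notag\\
&= \left(\sum_{i=1}^k a_i^2 \cos^2\theta_i\right)\|X\|^2 + 2\sum_{1 \leq i < j \leq k} a_i a_j\, \bar g(T_i X, T_j X), \notag
\end{align}
using the pointwise slant hypothesis $\|T_i X\|^2 = \cos^2\theta_i \|X\|^2$ for each $i$. Dividing by $\|X\|^2$ (for $X_x \neq 0$), the submanifold $M$ is pointwise slant with respect to $\bar J_{a_1,\dots,a_k}$ precisely when the right-hand side divided by $\|X\|^2$ is independent of the choice of nonzero $X_x \in T_x M$; since $\sum_i a_i^2(x)\cos^2\theta_i(x)$ already depends only on $x$, this is equivalent to $\sum_{1 \leq i < j \leq k} a_i(x) a_j(x)\, \bar g(T_i u, T_j u)$ being independent of the unit vector $u \in T_x M$ (for $v = \|v\| u$ the cross term scales as $\|v\|^2$, exactly as in the proof of Theorem~\ref{p5}). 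This gives both the stated equivalence and, reading off $\cos^2\theta(x) = \|T_{a_1,\dots,a_k} u\|^2$ for a unit vector $u$, the formula for $\theta(x)$.

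I do not anticipate a genuine obstacle here; the only point requiring mild care is the handling of the set of $x$ where the cross-term coefficients vanish. In Theorem~\ref{p5} this was phrased via the condition $a(x)b(x) \neq 0$; in the $k$-term version one must note that at points $x$ where all products $a_i(x)a_j(x)$ relevant to nonzero summands vanish, the cross term is automatically zero and the slant condition holds trivially, so the substantive hypothesis is only imposed where it is not vacuous — which is why the statement can be phrased simply as ``for any $x \in M$'' without an exclusion clause, the trivial points being absorbed into the same formula. Beyond this bookkeeping, the proof is the routine $k$-term analogue of the $k=2$ case and I would simply write ``the proof is analogous to that of Theorem~\ref{p5}'' after recording the displayed expansion above.
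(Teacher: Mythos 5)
Your proposal is correct and is exactly the intended argument: the paper gives no separate proof of Theorem~\ref{p6}, presenting it as the evident $k$-term generalization of Theorem~\ref{p5}, and your expansion of $\Vert T_{a_1,\dots,a_k}X\Vert^2$ with the cross terms scaling as $\Vert v\Vert^2$ reproduces that proof \emph{mutatis mutandis}. Your remark on why no exclusion clause analogous to $a(x)b(x)\neq 0$ is needed (the condition is imposed on the full sum, which is trivially independent of $u$ wherever the coefficients vanish) is a correct and welcome clarification.
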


\begin{corollary}\label{c10}
Under the hypothesis of Theorem \ref{p6}, if $\bar g(T_iX,T_jX)=0$ for any $X\in \nolinebreak \Gamma(TM)$ and any $i\neq j$, then $M$ is a pointwise slant submanifold of
$(\bar M,\bar g,\bar J_{a_1,\dots,a_k})$, with the slant function
$$\theta=\arccos\left(\sqrt{\sum_{i=1}^ka_i^2\cos^2\theta_i}\right).$$
\end{corollary}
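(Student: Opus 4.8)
The plan is to obtain Corollary~\ref{c10} as a direct specialization of Theorem~\ref{p6}. Theorem~\ref{p6} is an ``if and only if'' statement, so the first step is simply to check that its hypothesis is met under the extra assumption of the corollary. I would observe that the condition $\bar g(T_iX,T_jX)=0$, being imposed for \emph{every} $X\in\Gamma(TM)$ and every pair $i\neq j$, applies in particular to unit tangent vectors; hence, for each $x\in M$ and each $u\in T_xM$ with $\Vert u\Vert=1$, all the terms $\bar g(T_iu,T_ju)$ with $i\neq j$ vanish, and therefore
$$\sum_{1\leq i<j\leq k}a_i(x)a_j(x)\bar g(T_iu,T_ju)=0.$$
In particular this quantity is (trivially) independent of $u$, which is exactly the necessary and sufficient condition appearing in Theorem~\ref{p6}. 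Consequently $M$ is a pointwise slant submanifold of the almost Hermitian manifold $(\bar M,\bar g,\bar J_{a_1,\dots,a_k})$.

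Next I would substitute this vanishing into the slant-function formula supplied by Theorem~\ref{p6}. Since the cross terms drop out, the radicand reduces to $\sum_{i=1}^k a_i^2(x)\cos^2\theta_i(x)$, yielding
$$\theta(x)=\arccos\left(\sqrt{\sum_{i=1}^k a_i^2(x)\cos^2\theta_i(x)}\right),$$
which is the asserted expression; note that $\theta$ no longer involves the auxiliary unit vector $u$, as it must for a genuine slant function.

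The closest thing to a subtle point — and it is only a routine verification — is confirming that the right-hand side really defines an admissible slant function, i.e.\ that the radicand lies in $[0,1]$ so that $\theta(x)$ is well defined with $\theta(x)\in(0,\frac{\pi}{2}]$. This is immediate from $0\leq\cos^2\theta_i(x)\leq 1$ for each $i$ together with $\sum_{i=1}^k a_i^2(x)=1$, which force $0\leq\sum_{i=1}^k a_i^2(x)\cos^2\theta_i(x)\leq 1$. I do not anticipate any real obstacle: the whole content of the corollary is the remark that pairwise orthogonality of the tangential parts $T_iX$ is a convenient sufficient (not necessary) condition ensuring the hypothesis of Theorem~\ref{p6}, after which both the pointwise slant conclusion and the simplified slant function follow by plain substitution. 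One may additionally observe that the case $k=2$ recovers Corollary~\ref{c9}.
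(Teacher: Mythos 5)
Your proposal is correct and follows exactly the route the paper intends: Corollary \ref{c10} is stated without a separate proof precisely because it is the immediate specialization of Theorem \ref{p6} to the case where all cross terms $\bar g(T_iu,T_ju)$ vanish, making the $u$-independence condition trivially satisfied and collapsing the slant-function formula to $\arccos\bigl(\sqrt{\sum_{i=1}^k a_i^2\cos^2\theta_i}\bigr)$. Your added check that the radicand lies in $[0,1]$ (using $\sum_i a_i^2=1$) is a harmless extra verification consistent with the paper's subsequent remark.
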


\begin{remark}
Under the hypothesis of Theorem \ref{p6}:

(i) if $\theta_i=\frac{\pi}{2}$ for any $i\in\{1,\dots,k\}$, then $\theta=\frac{\pi}{2}$;

(ii) if $\bar g(T_iX,T_jX)=0$ for any $X\in \Gamma(TM)$ and any $i\neq j$,
since $$\min\{\cos^2\theta_i\ : \ i=\overline{1,k}\}\leq\sum_{i=1}^ka_i^2\cos^2\theta_i\leq\max\{\cos^2\theta_i\ : \ i=\overline{1,k}\},$$ we notice that $$\min\{\theta_1,\dots,\theta_k\}\leq\theta\leq \max\{\theta_1,\dots,\theta_k\}.$$
\end{remark}

Let $k\in \mathbb N$, $k\geq 2$, and let $(\bar M,\bar g, \bar J_i)$ and $(\bar M_i,\bar g_i, \bar J_i)$, $i=\overline{1,k}$, be almost Hermitian manifolds. We consider the direct products
$$\Big(\tilde {\bar M}:=\bar M\times \stackrel{k}{\dots}\times \bar M, \ \tilde {\bar g}\left((X_1,\dots,X_k), (Y_1,\dots,Y_k)\right):=\sum_{i=1}^k\bar g(X_i,Y_i)\Big),$$
$$\Big(\hat {\bar M}:=\bar M_1\times {\cdots}\times \bar M_k,\ \hat {\bar g}\left((X_1,\dots,X_k), (Y_1,\dots,Y_k)\right):=\sum_{i=1}^k\bar g_i(X_i,Y_i)\Big),$$
and the endomorphism
$\tilde {\bar J}(X_1,\dots,X_k):=(\bar J_1X_1,\dots,\bar J_kX_k)$.
Then, $(\tilde {\bar M}, \tilde {\bar g}, \tilde {\bar J})$ and $(\hat {\bar M}, \hat{\bar g}, \tilde {\bar J})$ are almost Hermitian manifolds, and we can state

\begin{proposition}
(i) If $M$ is a pointwise slant submanifold of the almost Hermitian manifolds $(\bar M,\bar g,\bar J_i)$, $i=\overline{1,k}$, with distinct slant functions $\theta_i$, $i=\overline{1,k}$, then $\tilde M:=\nolinebreak M\times \stackrel{k}{\dots}\times M$ is a $k$-pointwise slant submanifold of $(\tilde {\bar M},\tilde {\bar g},\tilde {\bar J})$, with the slant functions $\theta_i$, $i=\overline{1,k}$.

(ii) If $M_i$ is a pointwise slant submanifold of the almost Hermitian manifold $(\bar M_i,\bar g_i,\bar J_i)$, with the slant function $\theta_i$, $i=\overline{1,k}$, then
$\hat M:=M_1\times {\cdots}\times M_k$ is a pointwise slant submanifold of $(\hat {\bar M},\hat {\bar g},\tilde {\bar J})$, with a slant function $\theta$,
if and only if, $\theta$ and $\theta_i$, for all $i=\overline{1,k}$, are constant and equal to the same value.
\end{proposition}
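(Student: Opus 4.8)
The plan is to handle both parts by a single elementary computation: describing how the tangential part of $\tilde{\bar J}$ splits across the factors of the product.

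For (i), I would first record the orthogonal decompositions $T\tilde M=\bigoplus_{i=1}^k\mathcal D_i$ and $T^{\bot}\tilde M=\bigoplus_{i=1}^k\mathcal D_i^{\bot}$, where $\mathcal D_i$ is the copy of $TM$ (and $\mathcal D_i^{\bot}$ the copy of $T^{\bot}M$) sitting inside the $i$-th factor $T\bar M$ of $T\tilde{\bar M}$; this is immediate from the definition of the product metric $\tilde{\bar g}$. For $X_i$ tangent to the $i$-th factor $M$, decomposing $\bar J_iX_i=T_iX_i+N_iX_i$ as in Section 3 gives
\[
\tilde{\bar J}(0,\dots,X_i,\dots,0)=(0,\dots,T_iX_i,\dots,0)+(0,\dots,N_iX_i,\dots,0),
\]
with the first summand in $\mathcal D_i$ and the second in $\mathcal D_i^{\bot}$. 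Hence the tangential projection $\tilde T$ of $\tilde{\bar J}$ maps each $\mathcal D_i$ into itself and restricts on $\mathcal D_i$ to (the $i$-th copy of) $T_i$; since $\|T_iX_i\|^2=\cos^2\theta_i\,\|X_i\|^2$, each $\mathcal D_i$ is a pointwise slant distribution with slant function $\theta_i$. As the $\theta_i$ are assumed pairwise distinct, $T\tilde M=\bigoplus_{i=1}^k\mathcal D_i$ is precisely the orthogonal, $\tilde T$-invariant decomposition into pointwise slant distributions with distinct slant functions that defines a $k$-pointwise slant submanifold, which proves (i).

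For (ii), the same bookkeeping applied to a general $X=(X_1,\dots,X_k)\in T_x\hat M$ at $x=(x_1,\dots,x_k)$, with $X_i\in T_{x_i}M_i$, gives $\hat TX=(T_1X_1,\dots,T_kX_k)$ for the tangential projection $\hat T$ of $\tilde{\bar J}$ on $\hat M$, and since $(\hat{\bar g},\tilde{\bar J})$ is almost Hermitian one has $\|\tilde{\bar J}X\|=\|X\|$, so
\[
\cos^2\alpha(X)=\frac{\|\hat TX\|^2}{\|X\|^2}=\frac{\sum_{i=1}^k\cos^2\theta_i(x_i)\,\|X_i\|^2}{\sum_{i=1}^k\|X_i\|^2},
\]
where $\alpha(X)$ is the angle between $\tilde{\bar J}X$ and $T_x\hat M$. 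If $\hat M$ is pointwise slant with slant function $\theta$, this ratio equals $\cos^2\theta(x)$ for every nonzero $X$; evaluating on the vectors supported in a single factor forces $\cos^2\theta_1(x_1)=\dots=\cos^2\theta_k(x_k)=\cos^2\theta(x_1,\dots,x_k)$ for every $(x_1,\dots,x_k)$. Fixing all coordinates but the $i$-th (possible since $k\geq 2$) shows $\cos^2\theta_i$ is constant on $M_i$; these constants then all coincide and equal $\cos^2\theta$, and since slant functions take values in $(0,\frac{\pi}{2}]$ where $\cos$ is injective, $\theta_1,\dots,\theta_k$ and $\theta$ are all constant and equal. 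The converse is trivial: if $\theta_i\equiv c$ for every $i$ and $\theta\equiv c$, the displayed identity gives $\|\hat TX\|^2=\cos^2c\,\|X\|^2$ for every $X$, so $\hat M$ is pointwise slant with angle $c$.

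The computations are routine; the only point needing care is the logic of the equivalence in (ii). The pointwise slant condition must be imposed on \emph{all} tangent vectors of the product, and it is exactly the ``mixed'' vectors $(X_1,\dots,X_k)$ with several nonzero components that force the slant functions to agree pointwise, after which the freedom to move each base point independently in a product upgrades pointwise agreement to constancy. In (i) one should likewise note that the distinctness hypothesis on the $\theta_i$ is what makes the canonical $k$-term splitting of $T\tilde M$ the decomposition attached to the notion of $k$-pointwise slant submanifold.
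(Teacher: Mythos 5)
Your argument is correct and follows essentially the same route as the paper: the key step in both is the identity $\Vert \hat T X\Vert^2=\sum_{i=1}^k\cos^2\theta_i(x_i)\Vert X_i\Vert^2$ together with $\Vert X\Vert^2=\sum_{i=1}^k\Vert X_i\Vert^2$, from which the equivalence in (ii) is read off. You merely spell out two points the paper leaves implicit, namely the verification in (i) that each factor distribution is $\tilde T$-invariant and pointwise slant, and the final deduction in (ii) that independence of the base points forces the $\theta_i$ to be constant.
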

\begin{proof}
(i) follows immediately from the definition of a \textit{$k$-pointwise slant submanifold of an almost Hermitian manifold} (see \cite{latcu2}).

(ii) Let $X_i\in \Gamma(TM_i)$, $i=\overline{1,k}$. We denote by $T_iX_i$ the tangential component of $\bar J_iX_i$ ($T_iX_i\in \Gamma(TM_i)$).
For any $(x_1,\dots,x_k)\in M_1\times\cdots \times M_k$, we identify $T_{(x_1,\dots,x_k)}(M_1\times \cdots \times M_k)$ with $T_{x_1}M_1\oplus \cdots \oplus T_{x_k}M_k$ and further,
$$T(M_1\times \cdots \times M_k)\cong \pi^*_1(TM_1)\oplus \cdots \oplus\pi^*_k(TM_k),$$
where $\pi_i:M_1\times \cdots \times M_k\rightarrow M_i$, $i=\overline{1,k}$, are the canonical projections.

Let $\tilde X\in \Gamma(T\hat M)$. Then, $\tilde X=(X_1, \dots,X_k)$, $X_i\in \Gamma(TM_i)$, $i=\overline{1,k}$.
We denote by $\tilde T\tilde X$ the tangential component of $\tilde{\bar{J}}\tilde X$ ($\tilde T\tilde X\in \Gamma(T\hat M)$), and we have
$\tilde T\tilde X=(T_1X_1,\dots,T_kX_k)$. Then, we get
$$\Vert \tilde T\tilde X\Vert^2_{\hat {\bar g}}=\sum_{i=1}^k\Vert T_iX_i\Vert^2_{\bar g_i}=\sum_{i=1}^k\cos^2\theta_i\cdot \Vert X_i\Vert^2_{\bar g_i}, \ \
\Vert \tilde X\Vert^2_{\hat {\bar g}}=\sum_{i=1}^k\Vert X_i\Vert^2_{\bar g_i}\,;$$
hence, $\hat M$ is a pointwise slant submanifold of $(\hat {\bar M},\hat {\bar g},\tilde {\bar J})$, with a slant function $\theta$, if and only if
$$\cos^2\theta \sum_{i=1}^k\Vert X_i\Vert^2_{\bar g_i}=\sum_{i=1}^k\cos^2\theta_i\cdot \Vert X_i\Vert^2_{\bar g_i}$$
for any $X_i\in \Gamma(TM_i)$, $i=\overline{1,k}$, equivalent to
$$\sum_{i=1}^k(\cos^2\theta-\cos^2\theta_i)\Vert X_i\Vert^2_{\bar g_i}=0$$
for any $X_i\in \Gamma(TM_i)$, $i=\overline{1,k}$, and we get the conclusion.
\end{proof}

\section{Slant submanifolds of slant submanifolds}

In this section, we put into light the transitivity property of "being pointwise slant submanifold" on a class of proper pointwise slant
immersed submanifolds in the almost Hermitian setting.

\bigskip

Let $M_2$ be an
immersed submanifold of an almost Hermitian manifold $(M_1,g,J_1)$, and let $\nabla^1$ and $\nabla^2$ be the Levi-Civita connections on $M_1$ and $M_2$, respectively.
For any $x\in M_2$, we have the orthogonal decomposition
$$T_xM_1=T_xM_2\oplus T^{\bot}_x M_2,$$
and for any $v\in T_xM_2$, we denote
$$J_1v=T_1v+N_1v,$$
where $T_1v$ and $N_1v$ represent the tangential and the normal component of $J_1v$, respectively.

\begin{lemma}
Let $M_2$ be a proper pointwise slant
immersed submanifold of an almost Hermitian manifold $(M_1,g,J_1)$, with the slant function $\theta_1$, and let
\linebreak
$J_2:=\sec\theta_1\cdot T_1$. Then, $(g, J_2)$ is an almost Hermitian structure on $M_2$. Moreover, it is a K\"{a}hler structure if and only if $$(\nabla^2_XT_1)Y=-\tan \theta_1\cdot X(\theta_1)\cdot T_1Y$$
for any $X,Y\in \Gamma(TM_2)$.
\end{lemma}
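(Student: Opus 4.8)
The plan is to verify the two assertions by direct computation, the only substantive ingredient being the pointwise form of Chen's identity $T_1^2=-\cos^2\theta_1\cdot I$ on $TM_2$, which I would derive on the spot rather than cite.

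First I would record that $T_1$ is a $g$-skew-symmetric $(1,1)$-tensor field on $M_2$: for $X,Y\in\Gamma(TM_2)$ one has $g(T_1X,Y)=g(J_1X,Y)=-g(X,J_1Y)=-g(X,T_1Y)$, since the normal components $N_1X$, $N_1Y$ are orthogonal to $TM_2$ and $J_1$ is $g$-skew-symmetric on $M_1$. Hence $T_1^2$ is a symmetric operator, and the pointwise slant hypothesis $\Vert T_1X\Vert^2=\cos^2\theta_1\Vert X\Vert^2$ gives $g(T_1^2X,X)=-\Vert T_1X\Vert^2=-\cos^2\theta_1\Vert X\Vert^2$ for every $X$; a symmetric endomorphism being determined by its quadratic form, this yields $T_1^2=-\cos^2\theta_1\cdot I$. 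Since $M_2$ is \emph{proper} pointwise slant, $\theta_1\neq\frac{\pi}{2}$ everywhere, so $\sec\theta_1$ is a well-defined smooth function (the smoothness of $\theta_1$ being the fact recalled from \cite{latcu}), and $J_2:=\sec\theta_1\cdot T_1$ makes sense as a $(1,1)$-tensor on $M_2$. Then $J_2^2=\sec^2\theta_1\cdot T_1^2=-I$, and $g(J_2X,Y)=\sec\theta_1\,g(T_1X,Y)=-\sec\theta_1\,g(X,T_1Y)=-g(X,J_2Y)$, so $(g,J_2)$ is an almost Hermitian structure on $M_2$.

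For the K\"ahler criterion I would compute the covariant derivative of $J_2$ with respect to the Levi-Civita connection $\nabla^2$ of the induced metric on $M_2$. Using the Leibniz rule and $X(\sec\theta_1)=\sec\theta_1\tan\theta_1\cdot X(\theta_1)$, for any $X,Y\in\Gamma(TM_2)$:
\begin{align*}
(\nabla^2_XJ_2)Y&=\nabla^2_X(\sec\theta_1\cdot T_1Y)-\sec\theta_1\cdot T_1(\nabla^2_XY)\\
&=X(\sec\theta_1)\cdot T_1Y+\sec\theta_1\cdot(\nabla^2_XT_1)Y\\
&=\sec\theta_1\Big((\nabla^2_XT_1)Y+\tan\theta_1\cdot X(\theta_1)\cdot T_1Y\Big).
\end{align*}
Since $\sec\theta_1$ never vanishes, $(g,J_2)$ is K\"ahler, i.e. $\nabla^2J_2=0$, if and only if $(\nabla^2_XT_1)Y=-\tan\theta_1\cdot X(\theta_1)\cdot T_1Y$ for all $X,Y\in\Gamma(TM_2)$, which is exactly the stated identity.

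I do not expect a serious obstacle here: once $T_1^2=-\cos^2\theta_1\cdot I$ is available, both parts are a few lines. The only points needing a little care are ensuring $\sec\theta_1$ and $\tan\theta_1$ are genuinely smooth (which is where $\theta_1\neq\frac{\pi}{2}$ and the smoothness of the slant function of a proper pointwise slant submanifold are used) and the passage from the quadratic-form identity to the operator identity for $T_1^2$, which relies on symmetry of $T_1^2$.
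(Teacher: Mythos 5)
Your proof is correct and follows essentially the same route as the paper's: verify $J_2^2=-I$ and skew-symmetry from $T_1^2=-\cos^2\theta_1\cdot I$, then apply the Leibniz rule to $\nabla^2 J_2$ and use that $\sec\theta_1$ never vanishes. You merely supply extra detail the paper leaves implicit (the polarization argument giving $T_1^2=-\cos^2\theta_1\cdot I$ and the explicit computation of $X(\sec\theta_1)$), which is fine.
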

\begin{proof}
For any $X,Y\in \Gamma(TM_2)$, we have
\begin{align*}
J_2^2X&=\sec^2\theta_1(-\cos^2\theta_1)X=-X,\\
g(J_2X,Y)&=\sec\theta_1\cdot g(T_1X,Y)=-\sec\theta_1\cdot g(X,T_1Y)=-g(X,J_2Y),\\
(\nabla^2_XJ_2)Y&=\nabla^2_XJ_2Y-J_2(\nabla^2_XY)=X(\sec\theta_1)T_1Y+\sec\theta_1\cdot (\nabla^2_XT_1)Y,
\end{align*}
hence the conclusion.
\end{proof}

We shall further call $J_2$ the \textit{naturally induced almost complex structure} on $M_2$.

\bigskip

\begin{theorem}\label{p2}
Let $M_2$ be a proper pointwise slant
immersed submanifold of an almost Hermitian manifold $(M_1,g,J_1)$, with the slant function $\theta_1$, let
$J_2:=\sec\theta_1\cdot \nolinebreak T_1$, and let $M_3$ be a pointwise slant
immersed submanifold of the almost Hermitian manifold $(M_2,g,J_2)$, with the slant function $\theta_2$.
Then, $M_3$ is a pointwise slant submanifold of $(M_1,g,J_1)$, with the slant function $$\tilde \theta_1=\arccos\Big(\cos\theta_1\cos\theta_2\Big).$$
\end{theorem}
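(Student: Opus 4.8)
The plan is to compute the tangential part of $J_1 v$ for $v\in T_xM_3$ directly, using the tower $M_3\subset M_2\subset M_1$ and the definition $J_2=\sec\theta_1\cdot T_1$. Fix $x\in M_3$ and $v\in T_xM_3$ with $\|v\|=1$. At $x$ we have the two orthogonal decompositions $T_xM_1=T_xM_2\oplus T^\perp_xM_2$ and $T_xM_2=T_xM_3\oplus(T_xM_3)^{\perp_{M_2}}$. Write $J_1v=T_1v+N_1v$ with $T_1v\in T_xM_2$, and then further split $T_1v=T\!_{(1)}v+W$ where $T\!_{(1)}v$ is the component of $T_1v$ in $T_xM_3$ and $W\in(T_xM_3)^{\perp_{M_2}}$. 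The claim is that $T\!_{(1)}v$ is exactly the tangential part (in $M_3\subset M_1$) of $J_1v$, because the orthogonal complement of $T_xM_3$ inside $T_xM_1$ is $(T_xM_3)^{\perp_{M_2}}\oplus T^\perp_xM_2$, and both $W$ and $N_1v$ lie there. So it suffices to show $\|T\!_{(1)}v\|^2=\cos^2\theta_1\cos^2\theta_2$.

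First I would record $\|T_1v\|^2=\cos^2\theta_1$ from the pointwise slant hypothesis of $M_2$ in $M_1$ (using $\|v\|=1$). Next, since $M_3$ is pointwise slant in $(M_2,g,J_2)$ with slant function $\theta_2$, and $J_2v=\sec\theta_1\cdot T_1v$, the tangential part (relative to $T_xM_3$) of $J_2v$ has squared norm $\cos^2\theta_2\|v\|^2=\cos^2\theta_2$. But the tangential part of $J_2v=\sec\theta_1\, T_1v$ relative to $T_xM_3$ is $\sec\theta_1\cdot T\!_{(1)}v$ (scalar multiples commute with orthogonal projection, and $\theta_1$ is evaluated at the point $x$). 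Hence $\sec^2\theta_1\,\|T\!_{(1)}v\|^2=\cos^2\theta_2$, giving $\|T\!_{(1)}v\|^2=\cos^2\theta_1\cos^2\theta_2$. Since this is independent of the unit vector $v$, $M_3$ is pointwise slant in $(M_1,g,J_1)$ with $\cos^2\tilde\theta_1=\cos^2\theta_1\cos^2\theta_2$, i.e. $\tilde\theta_1=\arccos(\cos\theta_1\cos\theta_2)$, and one checks $\tilde\theta_1(x)\in(0,\tfrac\pi2]$ since $\theta_1(x)\in(0,\tfrac\pi2)$ and $\theta_2(x)\in(0,\tfrac\pi2]$.

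The step requiring care is the identification of orthogonal projections: one must verify that the tangential component of $J_1v$ with respect to $T_xM_3$ (as a submanifold of $M_1$) really equals the $T_xM_3$-component of $T_1v$, which amounts to the nesting of orthogonal complements $T^\perp_{x,M_1}M_3=(T_xM_3)^{\perp_{M_2}}\oplus T^\perp_{x,M_1}M_2$ — this is immediate from $T_xM_2=T_xM_3\oplus(T_xM_3)^{\perp_{M_2}}$ and $T_xM_1=T_xM_2\oplus T^\perp_{x,M_1}M_2$, but it is the one place where the induced-metric conventions must be used consistently. Everything else is bookkeeping with the Pythagorean relation $\|J_iv\|^2=\|T_iv\|^2+\|N_iv\|^2$ and the fact that $\sec\theta_1$ is a pointwise scalar. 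I would also remark at the end that this shows the slant function of $M_3$ in $M_1$ satisfies $\tilde\theta_1\ge\max\{\theta_1,\theta_2\}$, matching the claim in the introduction.
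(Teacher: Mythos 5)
Your argument is correct and is essentially the paper's own proof: both rest on the identity $J_1v=\cos\theta_1\, J_2v+N_1v$ together with the nesting of orthogonal complements $T^{\perp}_{x,M_1}M_3=(T_xM_3)^{\perp_{M_2}}\oplus T^{\perp}_{x,M_1}M_2$, which identifies the $M_1$-tangential part of $J_1v$ along $M_3$ with $\cos\theta_1$ times the $M_2$-tangential part of $J_2v$. The only difference is notational (you project $T_1v$ and rescale, the paper projects $J_2v$ directly), so there is nothing further to add.
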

\begin{proof}
For any $x\in M_3$, we have the orthogonal decomposition
$$T_xM_2=T_xM_3\oplus T^{\bot_2}_x M_3,$$
where $T^{\bot_2}_x M_3$ stands for the orthogonal complement of $T_xM_3$ in $T_xM_2$, and for any $v\in T_xM_3$, we denote
$$J_2v=T_2v+N_2v,$$
where $T_2v$ and $N_2v$ represent the tangential and the normal component of $J_2v$, respectively.

Since $M_3$ is also an
immersed submanifold of $(M_1,g,J_1)$, for any $v\in T_xM_3$, we denote $J_1v=\tilde T_1v+\tilde N_1v$ with $\tilde T_1v\in T_xM_3$ and $\tilde N_1v\in T^{\bot_1}_xM_3$, where $T^{\bot_1}_x M_3$ stands for the orthogonal complement of $T_xM_3$ in $T_xM_1$, and we have
\begin{align}
\tilde T_1v+\tilde N_1v&=J_1v=T_1v+N_1v\notag\\
&=\cos \theta_1(x) \cdot J_2v+N_1v\notag\\
&=\cos \theta_1(x)(T_2v+N_2v)+N_1v.\notag
\end{align}
Then,
$$
\Vert \tilde T_1v\Vert^2=\cos^2\theta_1(x)\Vert T_2v\Vert^2=\cos^2\theta_1(x)\cos^2\theta_2(x)\Vert v\Vert^2,$$
hence the conclusion.
\end{proof}

\begin{remark}\label{r1}
Since $$\cos\theta_1\cos\theta_2\leq \min\{\cos\theta_1,\cos\theta_2\},$$
we notice that $\tilde \theta_1\geq \max \{\theta_1,\theta_2\}$.
\end{remark}

\begin{example} \label{e3}
Let $M_1$ be the $8$-dimensional almost Hermitian manifold $\mathbb R^{8}$ with the Euclidean metric $g_0$ and with the almost complex structure $J_1$ defined by
$$J_1\Big(\frac{\partial}{\partial u_i}\Big)=-\frac{\partial}{\partial v_i}, \ \
J_1\Big(\frac{\partial}{\partial v_i}\Big)=\frac{\partial}{\partial u_i}$$
for $i=\overline{1,4}$, where we denoted by $(u_1,v_1,\dots,u_4,v_4)$ the canonical coordinates in $\mathbb R^8$.

Let $M_2$ be an immersed submanifold of $M_1$, defined by
$$F_1:\mathbb R^4\rightarrow \mathbb R^8,$$
$$F_1(x_1,x_2,x_3,x_4):= \Big(x_{1}+x_2, x_1-x_2,x_3+x_4, x_3-x_4,x_1,x_2,x_3,x_4\Big).$$

Then, $TM_2$ is spanned by the orthogonal vector fields:
$$X_{1} =\frac{\partial }{\partial u_{1}}+\frac{\partial }{\partial v_{1}}+\frac{\partial }{\partial u_{3}},\ \ X_{2} =\frac{\partial }{\partial u_{1}}-\frac{\partial }{\partial v_{1}}+\frac{\partial }{\partial v_{3}},$$
$$X_{3} =\frac{\partial }{\partial u_{2}}+\frac{\partial }{\partial v_{2}}+\frac{\partial }{\partial u_{4}},\ \ X_{4} =\frac{\partial }{\partial u_{2}}-\frac{\partial }{\partial v_{2}}+\frac{\partial }{\partial v_{4}}.$$

Applying $J_1$ to the base vector fields of $TM_2$, we get:
$$
J_1 X_{1} =\frac{\partial }{\partial u_{1}}-\frac{\partial }{\partial v_{1}}-\frac{\partial
}{\partial v_{3}},\ \
J_1 X_{2} =-\frac{\partial }{\partial u_{1}}-\frac{\partial
}{\partial v_{1}}+\frac{\partial }{\partial u_{3}},$$
$$J_1X_{3} =\frac{\partial }{\partial u_{2}}-\frac{\partial }{\partial v_{2}}-\frac{\partial }{\partial v_{4}},\ \
J_1X_{4} =-\frac{\partial }{\partial u_{2}}-\frac{\partial }{\partial v_{2}}+\frac{\partial }{\partial u_{4}}.$$

Then,
$$\Vert X_i\Vert=\sqrt{3}=\Vert J_1X_i\Vert, \ \ i=\overline{1,4},$$
$$g_0(J_1 X_1,X_2)=1 =g_0(J_1 X_3,X_4),$$$$g_0(X_1,J_1 X_2)=-1 =g_0(X_3,J_1 X_4),$$
$$g_0(J_1 X_1,X_3)=g_0(J_1 X_1,X_4)=g_0(J_1 X_2,X_3)=g_0(J_1 X_2,X_4)=0,
$$$$g_0(X_1,J_1 X_3)=g(X_1,J_1 X_4)=g_0(X_2,J_1 X_3)=g_0(X_2,J_1 X_4)=0;$$
hence, $M_2$ is a slant submanifold of $(\mathbb R^8, g_0,J_1)$, with the slant angle
$$\theta_{1}=\arccos \Big(\frac{1}{3}\Big).$$

We get: $$T_1X_1=\frac{1}{3}X_2, \ \ T_1X_2=-\frac{1}{3}X_1, \ \ T_1X_3=\frac{1}{3}X_4, \ \ T_1X_4=-\frac{1}{3}X_3,$$
and we define on $M_2$ the almost complex structure $J_2:=\sec \theta_1\cdot T_1$, i.e.,
$$J_2X_1=X_2, \ \ J_2X_2=-X_1, \ \ J_2X_3=X_4, \ \ J_2X_4=-X_3.$$

Let $M_3$ be an immersed submanifold of $\mathbb R^4$, defined by
$$F_2:\mathbb R^2\rightarrow \mathbb R^4,\ \ F_2(x_1,x_2):= \Big(x_1,x_1+x_2,x_1-x_2,x_2\Big),$$
and let $\tilde M_3$ be the submanifold of $M_2$ defined by the immersion
$$\tilde F_1:=F_1\circ F_2:\mathbb R^2\rightarrow \mathbb R^8,$$
$$\tilde F_1(x_1,x_2):=\Big(2x_{1}+x_2, -x_2,x_1, x_1-2x_2, x_1,x_1+x_2, x_1-x_2,x_2\Big).$$

Then, $T\tilde M_3$ is spanned by the orthogonal vector fields:
$$Z_{1} =2\frac{\partial }{\partial u_{1}}+\frac{\partial }{\partial u_{2}}+\frac{\partial }{\partial v_{2}}+\frac{\partial }{\partial u_{3}}+\frac{\partial }{\partial v_{3}}+\frac{\partial }{\partial u_{4}},$$
$$Z_{2} =\frac{\partial }{\partial u_{1}}-\frac{\partial }{\partial v_{1}}-2\frac{\partial }{\partial v_{2}}+\frac{\partial }{\partial v_{3}}-\frac{\partial }{\partial u_{4}}+\frac{\partial }{\partial v_{4}},$$
or, relative to the base $\{X_1, X_2, X_3, X_4\}$ of $TM_2$:
$$Z_1=X_1+X_2+X_3, \ \ Z_2=X_2-X_3+X_4.$$

Applying $J_2$ to to the base vector fields of $T\tilde M_3$, we get:
$$J_2 Z_1=-X_1+X_2+X_4, \ \ J_2 Z_2=-X_1-X_3-X_4.$$

Then,
$$\Vert Z_i\Vert=\sqrt{3}=\Vert J_2 Z_i\Vert, \ \ i={1,2},$$
$$\vert g_0(J_2 Z_1,Z_2)\vert=2=\vert g_0(J_2 Z_2,Z_1)\vert;$$
hence, $\tilde M_3$ is a slant submanifold of $(M_2,g_0,J_2)$, with the slant angle
$$\theta_{2}=\arccos \Big(\frac{2}{3}\Big).$$

On the other hand, $\tilde M_3$ is an immersed submanifold of $M_1$, with the immersion given by $\tilde F_1:\mathbb R^2\rightarrow \mathbb R^8$.

Then, $T\tilde M_3$ is spanned by the orthogonal vector fields $Z_1$ and $Z_2$ from above. Applying $J_1$ to this base of $T\tilde M_3$, we get:
\begin{align}
J_1 Z_{1} &=-2\frac{\partial }{\partial v_{1}}+\frac{\partial }{\partial u_{2}}-\frac{\partial }{\partial v_{2}}+\frac{\partial }{\partial u_{3}}-\frac{\partial }{\partial v_{3}}-\frac{\partial }{\partial v_{4}},\notag \\
J_1 Z_{2} &=-\frac{\partial }{\partial u_{1}}-\frac{\partial }{\partial v_{1}}-2\frac{\partial }{\partial u_{2}}+\frac{\partial }{\partial u_{3}}+\frac{\partial }{\partial u_{4}}+\frac{\partial }{\partial v_{4}}.\notag
\end{align}

Then,
$$\Vert Z_i\Vert=3=\Vert J_1Z_i\Vert, \ \ i={1,2},$$
$$\vert g_0(J_1 Z_1,Z_2)\vert =2=\vert g_0(J_1 Z_2,Z_1)\vert;$$
hence, $\tilde M_3$ is a slant submanifold of $(M_1, g_0,J_1)$, with the slant angle
$$\tilde \theta_1=\arccos \Big(\frac{2}{9}\Big),$$
and we notice that
$$\cos\tilde \theta_1=\cos \theta_1 \cdot \cos\theta_2.$$
\end{example}

\bigskip

For $\theta_2=\frac{\pi}{2}$ in Theorem \ref{p2}, we obtain
\begin{corollary}\label{c}
If $M_2$ is a proper pointwise slant
immersed submanifold of an almost Hermitian manifold $(M_1,g,J_1)$, with the slant function $\theta_1$,
and $M_3$ is an anti-invariant
immersed submanifold of the almost Hermitian manifold $(M_2,g,J_2)$, where $J_2:=\sec\theta_1\cdot T_1$, then $M_3$ is an anti-invariant submanifold of $(M_1,g,J_1)$.
\end{corollary}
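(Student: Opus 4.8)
The plan is to obtain this as the degenerate case $\theta_2=\frac{\pi}{2}$ of Theorem \ref{p2}. First I would record the purely terminological point that, with the conventions fixed in Section 3, an anti-invariant immersed submanifold of $(M_2,g,J_2)$ is precisely a pointwise slant immersed submanifold of $(M_2,g,J_2)$ whose slant function is the constant $\theta_2\equiv\frac{\pi}{2}$. Since $M_2$ is assumed to be \emph{proper} pointwise slant in $(M_1,g,J_1)$, the function $\sec\theta_1$ is well defined and $J_2:=\sec\theta_1\cdot T_1$ is meaningful; thus the full hypotheses of Theorem \ref{p2} are satisfied with this choice of $\theta_2$, and there is nothing extra to verify.

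Applying Theorem \ref{p2} then gives that $M_3$ is a pointwise slant submanifold of $(M_1,g,J_1)$ with slant function
$$\tilde\theta_1=\arccos\big(\cos\theta_1\cos\theta_2\big)=\arccos\big(\cos\theta_1\cdot\cos\tfrac{\pi}{2}\big)=\arccos(0)=\frac{\pi}{2},$$
and a submanifold whose slant function is identically $\frac{\pi}{2}$ is, by definition, anti-invariant. This yields the claim.

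For completeness one may also argue directly at the level of the tangential operators, which is exactly what the proof of Theorem \ref{p2} specialises to here: for $v\in T_xM_3$ the identity $J_2=\sec\theta_1\cdot T_1$ gives $T_1v=\cos\theta_1(x)\,J_2v$, so the $T_xM_3$-component $\tilde T_1v$ of $J_1v$ coincides with the $T_xM_3$-component of $\cos\theta_1(x)\,J_2v$, namely $\cos\theta_1(x)\,T_2v$; anti-invariance of $M_3$ in $(M_2,g,J_2)$ forces $T_2v=0$, hence $\tilde T_1v=0$, i.e. $J_1v\in T^{\bot_1}_xM_3$. I do not foresee any genuine obstacle here; the only step that deserves a word is the observation that "anti-invariant" is the $\theta=\frac{\pi}{2}$ instance of "pointwise slant", which is precisely what makes Theorem \ref{p2} applicable without modification.
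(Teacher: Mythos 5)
Your proposal is correct and follows exactly the paper's route: the corollary is obtained as the specialization $\theta_2=\frac{\pi}{2}$ of Theorem \ref{p2}, using that anti-invariance means the slant function is identically $\frac{\pi}{2}$, so that $\tilde\theta_1=\arccos(\cos\theta_1\cdot 0)=\frac{\pi}{2}$. The supplementary direct argument via $\tilde T_1v=\cos\theta_1(x)\,T_2v=0$ is a valid (and slightly more explicit) restatement of what the proof of Theorem \ref{p2} reduces to in this case.
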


\begin{example} \label{e4}
Let $(M_1, g_0,J_1)$ and $(M_2, g_0,J_2)$ be the almost Hermitian manifolds and $F_1$ the immersion defined in Example \ref{e3}.

Let $M_3$ be an immersed submanifold of $\mathbb R^4$, defined by
$$F_2:\mathbb R^2\rightarrow \mathbb R^4,\ \ F_2(x_1,x_2):= \Big(2x_1,x_2,2x_2,x_1\Big).$$

The image of $M_3$ through $F_1$ is $\tilde M_3:=F_1(M_3)$, which is an immersed submanifold of $M_2$. Denoting $\tilde F_1:=F_1\circ F_2$, we have $\tilde M_3=\tilde F_1(\mathbb R^2)$, where
$$\tilde F_1:\mathbb R^2\rightarrow \mathbb R^8,$$
$$\tilde F_1(x_1,x_2):= \Big(2x_{1}+x_2, 2x_1-x_2,x_1+2x_2, -x_1+2x_2, 2x_1,x_2, 2x_2,x_1\Big).$$

We denote by $(u_1,v_1,\dots, u_4,v_4)$ the canonical coordinates in $\mathbb R^8$.
Then, $T\tilde M_3$ is spanned by the orthogonal vector fields:
$$Z_{1} =2\frac{\partial }{\partial u_{1}}+2\frac{\partial }{\partial v_{1}}+\frac{\partial }{\partial u_{2}}-\frac{\partial }{\partial v_{2}}+2\frac{\partial }{\partial u_{3}}+\frac{\partial }{\partial v_{4}},$$
$$Z_{2} =\frac{\partial }{\partial u_{1}}-\frac{\partial }{\partial v_{1}}+2\frac{\partial }{\partial u_{2}}+2\frac{\partial }{\partial v_{2}}+\frac{\partial }{\partial v_{3}}+2\frac{\partial }{\partial u_{4}},$$
or, relative to the base $\{X_1, X_2, X_3, X_4\}$ of $TM_2$ from Example \ref{e3}:
$$Z_1=2 X_1+X_4, \ \ Z_2=X_2+2 X_3.$$

Applying $J_2$ to the base vector fields of $T\tilde M_3$, we get:
$$J_2 Z_{1} =2 X_2-X_3,\ \ J_2 Z_{2} =-X_1+2 X_4.$$

Then,
$$g_0(J_2 Z_1,Z_2) =0=g_0(J_2 Z_2,Z_1),$$
hence, $\tilde M_3$ is an anti-invariant submanifold of $M_2$.

On the other hand, $\tilde M_3$ is an immersed submanifold of $M_1$, with the immersion given by
$\tilde F_1:\mathbb R^2\rightarrow \mathbb R^8$, and $T\tilde M_3$ is spanned by the orthogonal vector fields $Z_{1}$ and $Z_{2}$ from above.

Applying $J_1$ to $Z_{1}$ and $Z_{2}$, we get:
\begin{align}
J_1 Z_{1} &=2\frac{\partial }{\partial u_{1}}-2\frac{\partial }{\partial v_{1}}-\frac{\partial }{\partial u_{2}}-\frac{\partial }{\partial v_{2}}-2\frac{\partial }{\partial v_{3}}+\frac{\partial }{\partial u_{4}},\notag \\
J_1 Z_{2} &=-\frac{\partial }{\partial u_{1}}-\frac{\partial }{\partial v_{1}}+2\frac{\partial }{\partial u_{2}}-2\frac{\partial }{\partial v_{2}}+\frac{\partial }{\partial u_{3}}-2\frac{\partial }{\partial v_{4}}.\notag
\end{align}

Then,
$$g_0(J_1 Z_1,Z_2)=0=g_0(J_1 Z_2,Z_1);$$
hence, $\tilde M_3$ is an anti-invariant submanifold of $(M_1, g_0,J_1)$.
\end{example}

\bigskip

We can state now a more general result.
Let $i\in \mathbb N^*$, and let $M_{i+1}$ be an
immersed submanifold of an almost Hermitian manifold $(M_i,g,J_i)$. Then, for any $x\in M_{i+1}$, we have the orthogonal decomposition
$$T_xM_{i}=T_xM_{i+1}\oplus T^{\bot}_x M_{i+1},$$
and for any $v\in T_xM_{i+1}$, we denote
$$J_iv=T_{i}v+N_{i}v,$$
where $T_{i}v$ and $N_{i}v$ represent the tangential and the normal component of $J_iv$, respectively.

\begin{theorem}\label{p3}
Let $(M_1,g,J_1)$ be an almost Hermitian manifold. Let $k\geq 2$, and let $M_{i+1}$ be a pointwise slant
immersed submanifold of the almost Hermitian manifold $(M_i,g,J_i)$, with the slant function $\theta_{i}$, $i=\overline{1,k}$, such that $\theta_{i}-\frac{\pi}{2}$ is nowhere zero on $M_{i+1}$, $i=\overline{1,k-1}$, where $J_{i}:=\sec\theta_{i-1}\cdot T_{i-1}$, $i=\overline{2,k}$. Then, $M_{k+1}$ is a pointwise slant submanifold of $(M_1,g,J_1)$, with the slant function $$\tilde \theta_{1}=\arccos\Big(\prod_{i=1}^{k}\cos \theta_{i}\Big).$$
\end{theorem}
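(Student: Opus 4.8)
The plan is to argue by induction on $k$, using Theorem~\ref{p2} (the case $k=2$) both as the base of the induction and as the engine of the inductive step, and relying on the Lemma to make sure that each $(M_i,g,J_i)$ is a genuine almost Hermitian manifold so that the statement is even well posed. First I would record the set-up: for every $i=\overline{2,k}$ the hypothesis that $\theta_{i-1}-\frac{\pi}{2}$ is nowhere zero on $M_i$ says exactly that $M_i$ is a \emph{proper} pointwise slant submanifold of $(M_{i-1},g,J_{i-1})$, so by the Lemma the naturally induced endomorphism $J_i:=\sec\theta_{i-1}\cdot T_{i-1}$ is an almost complex structure on $M_i$ compatible with $g$; hence $(M_i,g,J_i)$ is an almost Hermitian manifold, and since a composition of injective immersions is an injective immersion, the tower $M_{k+1}\subset M_k\subset\cdots\subset M_1$ is meaningful together with the nested tangent-space decompositions used below.

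For the induction, the case $k=1$ is vacuous: it is just the assumption that $M_2$ is pointwise slant in $(M_1,g,J_1)$ with slant function $\theta_1=\arccos(\cos\theta_1)$. Assume the statement holds for towers of length $k-1$ and consider a tower of length $k$ as in the hypothesis. Applying the inductive hypothesis to the shifted tower $M_{k+1}\subset M_k\subset\cdots\subset M_2$ inside the almost Hermitian manifold $(M_2,g,J_2)$ — whose hypotheses (namely $M_{i+1}$ pointwise slant in $(M_i,g,J_i)$ for $i=\overline{2,k}$, properness of $\theta_i$ for $i=\overline{2,k-1}$, and $J_i=\sec\theta_{i-1}\cdot T_{i-1}$ for $i=\overline{3,k}$) are all contained in the original ones — yields that $M_{k+1}$ is a pointwise slant submanifold of $(M_2,g,J_2)$ with slant function $\arccos\big(\prod_{i=2}^{k}\cos\theta_i\big)$, which indeed takes values in $(0,\frac{\pi}{2}]$ because each $\cos\theta_i\in[0,1)$.

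Now $M_2$ is a \emph{proper} pointwise slant submanifold of $(M_1,g,J_1)$ with slant function $\theta_1$, the structure $J_2$ equals $\sec\theta_1\cdot T_1$, and $M_{k+1}$ is pointwise slant in $(M_2,g,J_2)$; these are precisely the hypotheses of Theorem~\ref{p2} applied to the triple $M_{k+1}\subset M_2\subset M_1$ (note that Theorem~\ref{p2} does \emph{not} require its innermost submanifold to be proper, which matches the fact that $M_{k+1}$ need not be proper in $M_k$). Theorem~\ref{p2} then gives that $M_{k+1}$ is pointwise slant in $(M_1,g,J_1)$ with slant function
\[
\arccos\!\Big(\cos\theta_1\cdot\cos\big(\arccos\big(\textstyle\prod_{i=2}^{k}\cos\theta_i\big)\big)\Big)
=\arccos\!\Big(\textstyle\prod_{i=1}^{k}\cos\theta_i\Big),
\]
using $\prod_{i=2}^{k}\cos\theta_i\in[0,1]$ to simplify $\cos\arccos$. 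This closes the induction.

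The only real content at each step is the computation already carried out in Theorem~\ref{p2}: decomposing $J_1v=\tilde T_1v+\tilde N_1v$ through $M_2$ makes the tangential part acquire exactly one factor $\cos\theta_1$, and the residual norm identity is the pointwise slant condition one level down. So no step is analytically hard; the main thing to be careful about is the index bookkeeping in the shifted tower and the check that the properness hypotheses are exactly what is needed — on the one hand so that the Lemma produces the structures $J_i$, and on the other so that Theorem~\ref{p2} is applicable at the top of the tower. I would also remark that, specializing the formula, if some $\theta_i$ with $i\le k-1$ were $\frac{\pi}{2}$ the intermediate structure would fail to exist, which is why those are excluded, whereas $\theta_k=\frac{\pi}{2}$ is allowed and simply forces $\tilde\theta_1=\frac{\pi}{2}$, consistent with Corollary~\ref{c}.
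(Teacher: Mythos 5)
Your induction on $k$, iterating Theorem~\ref{p2} along the tower and using the Lemma to guarantee that each $(M_i,g,J_i)$ is a genuine almost Hermitian manifold, is correct and is exactly the argument the paper intends (the paper states Theorem~\ref{p3} without proof, as the evident iteration of Theorem~\ref{p2}). Your bookkeeping of which levels need the properness hypothesis $\theta_i\neq\frac{\pi}{2}$ (namely $i=\overline{1,k-1}$, so that $J_{i+1}$ exists, while $\theta_k$ may equal $\frac{\pi}{2}$) matches the statement precisely.
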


\begin{remark}\label{r2}
Since $$\cos\tilde \theta_{1}=\prod_{i=1}^{k}\cos \theta_{i}\leq \min\{\cos\theta_{i}\ : \ i=\overline{1,k}\},$$
we notice that $\tilde \theta_{1}\geq \max \{\theta_1,\dots,\theta_{k}\}$.
\end{remark}

\bigskip

For $\theta_{k}=\frac{\pi}{2}$, we obtain
\begin{corollary}
If $(M_1,g,J_1)$ is an almost Hermitian manifold, $k\geq 2$, and $M_{i+1}$ is a pointwise slant
immersed submanifold of the almost Hermitian manifold $(M_i,g,J_i)$, $i=\overline{1,k}$, where $J_{i}:=\sec\theta_{i-1}\cdot T_{i-1}$, $i=\overline{2,k}$, with the slant function $\theta_{i}$ such that $\theta_{i}-\nolinebreak \frac{\pi}{2}$ is nowhere zero on $M_{i+1}$, $i=\overline{1,k-1}$, and $M_{k+1}$ is an anti-invariant
immersed submanifold of $M_k$, then $M_{k+1}$ is an anti-invariant submanifold of $(M_1,g,J_1)$.
\end{corollary}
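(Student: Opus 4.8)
The plan is to read the statement off Theorem~\ref{p3} in the degenerate case $\theta_k \equiv \frac{\pi}{2}$. First I would note that an anti-invariant immersed submanifold $M_{k+1}$ of $(M_k,g,J_k)$ is precisely a pointwise slant immersed submanifold of $(M_k,g,J_k)$ whose slant function $\theta_k$ is the constant $\frac{\pi}{2}$, and that this constant is an admissible slant function since its value lies in $(0,\frac{\pi}{2}]$; observe that Theorem~\ref{p3} imposes \emph{no} properness restriction on the last link $\theta_k$, so nothing in its hypotheses is violated. All the other hypotheses of Theorem~\ref{p3} — that each $M_{i+1}$ is pointwise slant in $(M_i,g,J_i)$ with $\theta_i-\frac{\pi}{2}$ nowhere zero on $M_{i+1}$ for $i=\overline{1,k-1}$, and that $J_i:=\sec\theta_{i-1}\cdot T_{i-1}$ for $i=\overline{2,k}$ — coincide verbatim with the standing assumptions of the corollary. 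Therefore Theorem~\ref{p3} applies directly and gives that $M_{k+1}$ is a pointwise slant submanifold of $(M_1,g,J_1)$ with slant function $\tilde\theta_1=\arccos\big(\prod_{i=1}^k\cos\theta_i\big)$.

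It then remains only to evaluate $\tilde\theta_1$. Because $\cos\theta_k=\cos\frac{\pi}{2}=0$, the product $\prod_{i=1}^k\cos\theta_i$ is identically zero on $M_{k+1}$, whence $\tilde\theta_1=\arccos 0=\frac{\pi}{2}$ at every point; that is, $M_{k+1}$ is an anti-invariant submanifold of $(M_1,g,J_1)$, as claimed. One can also obtain this last step from Remark~\ref{r2}: there $\tilde\theta_1\geq\max\{\theta_1,\dots,\theta_k\}=\frac{\pi}{2}$, and since any slant function is $\leq\frac{\pi}{2}$, necessarily $\tilde\theta_1\equiv\frac{\pi}{2}$. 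This corollary is the $k$-step extension of Corollary~\ref{c}, which is the case $k=2$, and the "main obstacle" is essentially nil — the only delicate point is recognizing that the degenerate value $\frac{\pi}{2}$ is an admissible slant function, so that Theorem~\ref{p3} may be invoked without any modification.

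For completeness, since Theorem~\ref{p3} is quoted without proof in the excerpt, I would establish it by induction on $k$, the base case $k=2$ being exactly Theorem~\ref{p2}. In the inductive step ($k\geq 3$) I would peel off the first link: $M_2$ is a proper pointwise slant submanifold of $(M_1,g,J_1)$ (properness because $\theta_1-\frac{\pi}{2}$ is nowhere zero on $M_2$), so $(g,J_2)$ with $J_2:=\sec\theta_1\cdot T_1$ is an almost Hermitian structure on $M_2$ by the lemma preceding Theorem~\ref{p2}. The chain $M_2\supset M_3\supset\cdots\supset M_{k+1}$ then satisfies the hypotheses of Theorem~\ref{p3} with parameter $k-1$ and base $(M_2,g,J_2)$ — the properness conditions and the relations $J_i=\sec\theta_{i-1}\cdot T_{i-1}$ for this sub-chain are literally the restrictions of those for the full chain — so the induction hypothesis gives that $M_{k+1}$ is pointwise slant in $(M_2,g,J_2)$ with slant function $\arccos\big(\prod_{i=2}^k\cos\theta_i\big)$; a final application of Theorem~\ref{p2} to $M_{k+1}\subset M_2\subset M_1$ yields slant function $\arccos\big(\cos\theta_1\cdot\prod_{i=2}^k\cos\theta_i\big)=\arccos\big(\prod_{i=1}^k\cos\theta_i\big)$ on $M_1$. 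The only thing to watch here is the index bookkeeping — confirming that the sub-chain inherits exactly the right hypotheses and that the intermediate slant function again takes values in $(0,\frac{\pi}{2}]$ so that Theorem~\ref{p2} is applicable; there is no analytic difficulty.
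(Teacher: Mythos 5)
Your proposal is correct and matches the paper's intended argument exactly: the corollary is obtained by specializing Theorem \ref{p3} to $\theta_k\equiv\frac{\pi}{2}$ (which the theorem permits, since properness is only required for $i=\overline{1,k-1}$), whence $\prod_{i=1}^k\cos\theta_i\equiv 0$ and $\tilde\theta_1\equiv\frac{\pi}{2}$. Your supplementary induction sketch for Theorem \ref{p3} via repeated application of Theorem \ref{p2} is also the route the paper implicitly takes.
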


\bigskip

From Theorem \ref{p2}, we can finally conclude
\begin{proposition}
The property of a submanifold to be pointwise slant (with respect to the naturally induced almost complex structure) is transitive on the class of proper pointwise slant
immersed submanifolds of almost Her\-mi\-tian manifolds.
\end{proposition}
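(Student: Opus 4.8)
The plan is to read off the proposition directly from Theorem~\ref{p2} and the Lemma that precedes it, the only genuinely new observation being that \emph{properness} propagates along the tower. First I would fix notation: let $(M_1,g,J_1)$ be an almost Hermitian manifold, let $M_2$ be a proper pointwise slant immersed submanifold of $M_1$ with slant function $\theta_1$, and let $M_3$ be a proper pointwise slant immersed submanifold of $(M_2,g,J_2)$ with slant function $\theta_2$, where $J_2:=\sec\theta_1\cdot T_1$ is the naturally induced almost complex structure on $M_2$. Here one uses that the slant function of a \emph{proper} pointwise slant submanifold is smooth, so $\sec\theta_1$, and hence $J_2$, is a well-defined smooth tensor field; and by the Lemma, $(g,J_2)$ is an almost Hermitian structure, so $(M_2,g,J_2)$ is again a member of the class. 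Since $M_3\hookrightarrow M_2\hookrightarrow M_1$ is a composition of injective immersions, $M_3$ is an (injective) immersed submanifold of $M_1$ as well.

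Next I would apply Theorem~\ref{p2}: it yields at once that $M_3$ is a pointwise slant submanifold of $(M_1,g,J_1)$ with slant function $\tilde\theta_1=\arccos\big(\cos\theta_1\cos\theta_2\big)$. It then only remains to verify that $\tilde\theta_1$ is again a \emph{proper} slant function, i.e.\ that $\tilde\theta_1(x)\neq\frac{\pi}{2}$ for every $x\in M_3$. This is immediate from the multiplicativity relation: since $M_2$ is proper in $M_1$ and $M_3$ is proper in $M_2$, we have $\theta_1(x)\neq\frac{\pi}{2}$ and $\theta_2(x)\neq\frac{\pi}{2}$, hence $\cos\theta_1(x)\neq 0$ and $\cos\theta_2(x)\neq 0$, so $\cos\tilde\theta_1(x)=\cos\theta_1(x)\cos\theta_2(x)\neq 0$ and therefore $\tilde\theta_1(x)\in(0,\frac{\pi}{2})$. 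Thus $M_3$ is a proper pointwise slant immersed submanifold of $(M_1,g,J_1)$, which is exactly the asserted transitivity of the relation on the class of proper pointwise slant immersed submanifolds of almost Hermitian manifolds.

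There is essentially no obstacle here; the proposition is a reformulation of Theorem~\ref{p2}, and the only point requiring care is the closure of the class — one must know that the naturally induced structure turns each intermediate submanifold back into an almost Hermitian manifold (the Lemma) and that the slant function produced by Theorem~\ref{p2} is nowhere $\frac{\pi}{2}$ (the nonvanishing of $\cos\tilde\theta_1=\cos\theta_1\cos\theta_2$). The same argument iterates verbatim, recovering Theorem~\ref{p3} for towers of arbitrary finite length, and one may also invoke Remark~\ref{r1} to note that in fact $\tilde\theta_1\ge\max\{\theta_1,\theta_2\}$.
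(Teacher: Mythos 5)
Your argument coincides with the paper's: the proposition is presented there as an immediate consequence of Theorem \ref{p2} (together with the Lemma guaranteeing that the naturally induced $(g,J_2)$ is again an almost Hermitian structure), with no further proof supplied. Your extra verification that properness propagates along the tower --- $\cos\tilde\theta_1=\cos\theta_1\cos\theta_2\neq 0$, so $\tilde\theta_1\neq\frac{\pi}{2}$ --- is correct and usefully makes explicit a point the paper leaves implicit.
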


\end{document}